\documentclass{amsart}

\usepackage[
  left=2.2cm,
  right=2.2cm,
  top=2.5cm,
  bottom=2.5cm
]{geometry}

\usepackage{amsmath}
\usepackage{amssymb}
\usepackage{amsthm}

\usepackage{graphicx}
\usepackage{caption}
\usepackage{subcaption}
\usepackage{epstopdf}

\usepackage{algorithm}
\usepackage{algpseudocode}
\usepackage{float}

\usepackage{xcolor}

\newtheorem{theorem}{Theorem}[section]
\newtheorem{lemma}[theorem]{Lemma}
\newtheorem{defn}[theorem]{Definition}
\theoremstyle{remark}
\newtheorem{remark}[theorem]{Remark}

\begin{document}

\title[Kernel TM Systems in Boundary Weighted Hardy Spaces]
{Kernel Takenaka-Malmquist Systems and Adaptive Approximation in Boundary Weighted Hardy Spaces}

\author{Tao Qian}
\address{
Macau Center for Mathematical Sciences,
Macau University of Science and Technology,
Macao, China
}
\email{tqian@must.edu.mo}

%\thanks{Corresponding authors: Tao Qian}

\subjclass[2020]{41A20, 42C05, 30H10}

\keywords{
Takenaka-Malmquist system,
boundary weighted Hardy space,
adaptive Fourier decomposition,
double-zero adaptive Fourier decomposition,
boundary vanishing condition,
rational approximation
}
\date{}

\begin{abstract}
Takenaka-Malmquist (TM) systems, as generalizations of the trigonometric function system, play a decisive role in approximation theory on the open unit disc and the upper-half complex plane. In the present paper with the setting of boundary weighted Hardy spaces we develop what we call kernel-TM systems and related approximation theory aspects. In such framework, besides adaptive Fourier decomposition (AFD), we also establish double-zero AFD (DAFD)as an advanced sparse representation in the generalized Hardy spaces. The paper also includes brief reviews of several commonly studied RKHSs as subclasses of the classical Hardy space with particular attention to admissibility of kernel-TM systems.
\end{abstract}

\maketitle

\section{Introduction}
\def\bD{\bf D}

In this section, we give a short survey of existing knowledge on classical Hardy spaces and adaptive TM system decompositions. Such decompositions are also customarily called AFD, standing for \emph{adaptive Fourier decomposition}.

AFD and Double Zero AFD (DAFD), as abbreviations of adaptive Fourier decomposition and double-zero adaptive Fourier decomposition, respectively, will be briefly recalled. For a more detailed survey of these subjects, the reader is referred to \cite{QWQW}. In the present paper, we mainly treat the Hardy space in the open unit disc. The upper-half complex plane case is similar, with non-essential but necessary modifications.

Let $H^2(\bD)$, or simply $H^2$, denote the Hardy space in the unit disc ${\bf D}$. Among a number of equivalent definitions, we use the one in terms of Taylor series expansions:
\[
H^2({\bD})
=
\left\{
f:\ f\ \text{is holomorphic with}\ 
f(z)=\sum_{k=0}^{\infty}c_kz^k,\quad
\sum_{k=0}^{\infty}|c_k|^2<\infty
\right\}.
\]

It is well known that $H^2({\bD})$ is an RKHS with reproducing kernel
\[
k_a(z)=\frac{1}{1-\overline{a}z},
\]
whose norm-one normalization is
\[
e_a(z)=\frac{\sqrt{1-|a|^2}}{1-\overline{a}z}.
\]
For a finite sequence $(a_1,\ldots,a_n)$ or an infinite sequence
$(a_1,\ldots,a_n,\ldots)$ of complex numbers in the unit disc, allowing repetitions, one defines a finite or infinite TM system
\[
\{B_k\}_{k=1}^n
\quad\text{or}\quad
\{B_k\}_{k=1}^{\infty},
\]
where
\[
B_k(z)
:=
\frac{\sqrt{1-|a_k|^2}}{1-\overline{a_k}z}
\prod_{l=1}^{k-1}\frac{z-a_l}{1-\overline{a_l}z}
=
e_{a_k}(z)\prod_{l=1}^{k-1}\tau_{a_l}(z)
\]
with
\[
\tau_{a_l}(z)=\frac{z-a_l}{1-\overline{a_l}z}
\]
being \emph{the canonical M\"obius transform with pole} $a_l$.

Let $\tilde{k}_{a_l}$ denote the multiple Szeg\"o kernel:
\begin{eqnarray}
\tilde{k}_{a_l}
=
\left[
\left(
\frac{\partial}{\partial\overline{a}}
\right)^{L(a_l)-1}
k_a
\right]_{a=a_l},
\end{eqnarray}
where $L(a_l)$ is the multiplicity, that is, the number of repetitions of $a_l$ in the finite sequence $(a_1,\ldots,a_l)$. Denote by $\tilde{e}_{a_l}$ the norm-one normalization of $\tilde{k}_{a_l}$.

In \cite{QSAFD} (see its Appendix), it is shown that $\{B_k\}_{k=1}^{\infty}$ is obtained by consecutively applying the Gram--Schmidt orthonormalization process to the sequence of multiple Szeg\"o kernels
\[
(\tilde{k}_{a_1},\ldots,\tilde{k}_{a_l},\ldots)
\]
corresponding to the sequence
\[
{\bf a}=(a_1,\ldots,a_l,\ldots).
\]
We sometimes indicate the dependence of $\{B_k\}$ on ${\bf a}$ by writing it as $\{{\ }^{\bf a}B_k\}$. It is basic knowledge in the theory of TM systems and Blaschke products that
\[
\{{\ }^{\bf a}B_k\}_{k=1}^{\infty}
\]
is an orthonormal system in $H^2({\bD})$ and is a basis of $H^2({\bD})$ if and only if
\[
\sum_{k=1}^{\infty}(1-|a_k|)=\infty,
\]
which is the non-Blaschke condition. The Fourier basis system
\[
\{z^l\}_{l=0}^{\infty}
\]
is a particular case. In \cite{QCT}, it is shown that TM systems are Schauder bases. For more general knowledge of TM systems, we refer the reader to \cite{Gar}.

Denote by $P_{\{a_1,\ldots,a_l\}}$ the orthogonal projection onto the span of
\[
\{\tilde{k}_{a_1},\ldots,\tilde{k}_{a_l}\},
\]
and let
\[
Q_{\{a_1,\ldots,a_l\}}
=
I-P_{\{a_1,\ldots,a_l\}}
\]
be the orthogonal projection onto the orthogonal complement of the span of $\{\tilde{k}_{a_1},\ldots,\tilde{k}_{a_l}\}$.

Let $f\in H^2({\bD})$ be fixed, and assume that $f_1=f$. Let $a_1,\ldots,a_k$ be given or selected. Then the $(k+1)$-th \emph{reduced remainder} $f_{k+1}$ is defined through the $a_k$-\emph{generalized backward shift operator} by
\begin{eqnarray}
\label{stead1}
f_{k+1}(z)
=
\frac{
f_k(z)-\langle f_k,e_{a_k}\rangle e_{a_k}(z)
}{
\frac{z-a_k}{1-\overline{a_k}z}
}
=
\frac{
Q_{\{e_{a_k}\}}(f_k)(z)
}{
\tau_{a_k}(z)
}
\in H^2({\bD}).
\end{eqnarray}
To generate AFD, $a_k$ is selected optimally according to
\begin{eqnarray}
\label{under1}
a_k
=
\arg\max
\left\{
|\langle f_k,e_a\rangle|:\ a\in{\bD}
\right\}.
\end{eqnarray}
The facts that, at every step, an optimal parameter $a_k$ is attainable in ${\bD}$, and  $f_{k+1}$ belongs to the Hardy space, and that the following expansion converges in the norm sense are all  established in \cite{QWa1}: 
\begin{eqnarray}
\label{AFD1}
f(z)
=
\sum_{k=1}^{\infty}
\langle f_k,e_{a_k}\rangle
{\ }^{\bf a}B_k(z),
\end{eqnarray}
where the TM system $\{{\ }^{\bf a}B_k\}$ is accordingly generated by the selected parameter sequence ${\bf a}=\{a_k\}$. The convergence-rate theory for greedy algorithms is applicable to AFD; see \cite{DT}. Besides the fast convergence resulting from the Maximal Selection Principle (MSP) \eqref{under1}, the advantages of AFD include the automatically obtained orthonormality of the expanding TM system and, furthermore, the increase in the phase derivatives, or instantaneous frequencies, of the terms in the resulting functional series. We also note the useful relation
\[
\langle f_k,e_{a_k}\rangle
=
\langle f,{\ }^{\bf a}B_k\rangle,
\]
which follows from orthogonality.

There is also an associated interpolation-type result. For any sequence
\[
{\bf b}=(b_1,\ldots,b_n,\ldots)
\]
in $\bD$, allowing repetitions, the following algebraic identity holds:
\begin{eqnarray}
\label{inter1}
f(z)
&=&
\sum_{k=1}^{n}
\langle f,{\ }^{\bf b}B_k\rangle
{\ }^{\bf b}B_k(z)
+
{\ }^{\bf b}f_{n+1}(z)
\prod_{l=1}^{n}\tau_{b_l}(z)
\nonumber\\
&=&
P_{\{b_1,\ldots,b_n\}}(f)(z)
+
{\ }^{\bf b}f_{n+1}(z)
\prod_{l=1}^{n}\tau_{b_l}(z).
\end{eqnarray}
This identity shows that the rational function
\[
P_{\{b_1,\ldots,b_n\}}(f)
\]
interpolates the given Hardy space function $f$ at the points $b_1,\ldots,b_n$. For every $j$ between $1$ and $n$, one has
\[
\begin{aligned}
f(b_j)
&=
\langle f,k_{b_j}\rangle\\
&=
\left\langle
P_{\{b_1,\ldots,b_n\}}(f)
+
{\ }^{\bf b}f_{n+1}
\prod_{l=1}^{n}\tau_{b_l},
k_{b_j}
\right\rangle\\
&=
P_{\{b_1,\ldots,b_n\}}(f)(b_j).
\end{aligned}
\]
This interpolation property, however, is solely a consequence of the reproducing property of $k_{b_j}$. It especially has nothing to do with the optimal parameter selection. The role of the latter is to enhance the convergence speed toward the originally given function $f$.

A recent study, called Double Zero AFD, or simply DAFD, \cite{QWQW}, marks a new milestone in the development of AFD-type sparse approximation. The new observation is that, under the optimal selections \eqref{under1}, one can improve upon \eqref{stead1}, and consequently upon \eqref{AFD1}. We explain this in detail.

Given $f\in H^2({\bD})$, it can be shown that there exists a sequence of functions $\tilde{f}_{k+1}$ in the Hardy space such that $\tilde{f}_1=f$ and
\begin{eqnarray}
\label{new}
\tilde{f}_{k+1}(z)
=
\frac{
\tilde{f}_k(z)
-
\langle\tilde{f}_k,e_{\tilde{a}_k}\rangle
e_{\tilde{a}_k}(z)
}{
\left[
\frac{z-\tilde{a}_k}
{1-\overline{\tilde{a}_k}z}
\right]^2
}
=
\frac{
Q_{\{{\tilde{a}_k}\}}(\tilde{f}_k)(z)
}{
\tau_{\tilde{a}_k}^2(z)
}
\in H^2({\bD}),
\end{eqnarray}
where
\begin{eqnarray}
\label{uunder}
\tilde{a}_k
=
\arg\max
\left\{
|\langle\tilde{f}_k,e_a\rangle|:\ a\in{\bD}
\right\}.
\end{eqnarray}
The orthogonal complement projection
\[
Q_{\{{\tilde{a}_k}\}}
\]
alone gives rise to a zero of order one at $\tilde{a}_k$, while the maximality of $\tilde{a}_k$ increases the order of the zero by one. This induces a new and more efficient decomposition of $f$ in terms of the double-TM system generated by
\[
\tilde{\bf a}
=
(\tilde{a}_1,\ldots,\tilde{a}_k,\ldots):
\]
\begin{eqnarray}
\label{DAFD}
f(z)
=
\sum_{k=1}^{\infty}
\langle\tilde{f}_k,e_{\tilde{a}_k}\rangle
{\ }^{\tilde{\bf a}}\tilde{B}_k(z),
\end{eqnarray}
where a general element of the double-TM system
\[
\{
{\ }^{\tilde{\bf a}}\tilde{B}_k
\}_{k=1}^{\infty}
\]
is given by
\[
{\ }^{\tilde{\bf a}}\tilde{B}_k(z)
=
e_{\tilde{a}_k}(z)
\prod_{l=1}^{k-1}
\tau_{\tilde{a}_l}^2(z).
\]
A double-TM system may be regarded as a particular TM system and is therefore orthonormal. The decomposition of $f$ obtained in \eqref{DAFD} is called the \emph{Double Zero AFD}, or simply the \emph{DAFD}, of $f$.

There is also a corresponding interpolation result. For any finite section of $\tilde{\bf a}$, one has
\begin{eqnarray}
\label{Hermit}
f(z)
&=&
\sum_{k=1}^{n}
\langle f,{\ }^{\tilde{\bf a}}\tilde{B}_k\rangle
{\ }^{\tilde{\bf a}}\tilde{B}_k(z)
+
\tilde{f}_{n+1}(z)
\prod_{l=1}^{n}\tau_{\tilde{a}_l}^2(z)
\nonumber\\
&=&
P_{\{\tilde{a}_1,\tilde{a}_1,\ldots,
\tilde{a}_{n-1},\tilde{a}_{n-1},\tilde{a}_n\}}(f)(z)
+
\tilde{f}_{n+1}(z)
\prod_{l=1}^{n}\tau_{\tilde{a}_l}^2(z),
\end{eqnarray}
where $\tilde{f}_{n+1}$ is the $(n+1)$-st-order double-reduced remainder in the Hardy space.

We note that, although \eqref{inter1} holds for any finite sequence ${\bf b}$, \eqref{Hermit} holds only for the optimally selected sequence $\tilde{\bf a}$. To understand \eqref{Hermit}, we review the iteration. Let $\tilde{a}_1=a_1$ be optimally selected according to \eqref{uunder}. As proved in \cite{QWQW}, $f_2(a_1)=0$. We take
\[
\tilde{a}_2=\tilde{a}_1=a_1
\]
without considering optimality again. Then, since
\[
\tilde{f}_1=f_1=f
\quad\text{and}\quad
f_2(\tilde{a}_2)=f_2(a_1)=0,
\]
we have
\[
\begin{aligned}
\tilde{f}_2(z)
&:=
\frac{
\tilde{f}_1(z)
-
\langle\tilde{f}_1,e_{\tilde{a}_1}\rangle
e_{\tilde{a}_1}(z)
}{
\tau_{\tilde{a}_1}^2(z)
}\\
&=
\frac{
f_1(z)
-
\langle f_1,e_{a_1}\rangle e_{a_1}(z)
}{
\tau_{a_1}^2(z)
}\\
&=
\frac{
f_2(z)
-
\langle f_2,e_{\tilde{a}_2}\rangle
e_{\tilde{a}_2}(z)
}{
\tau_{\tilde{a}_2}(z)
}.
\end{aligned}
\]

The above iteration involves repeated selections of certain parameters and, as a result,
\[
\sum_{k=1}^{n}
\langle f,{\ }^{\tilde{\bf a}}\tilde{B}_k\rangle
{\ }^{\tilde{\bf a}}\tilde{B}_k(z)
=
P_{\{\tilde{a}_1,\tilde{a}_1,\ldots,
\tilde{a}_{n-1},\tilde{a}_{n-1},\tilde{a}_n\}}(f)(z).
\]
The last equality can also be understood from the fact that the span of the finite TM system generated by the multiple Szeg\"o kernels associated with the parameter sequence
\[
(\tilde{a}_1,\tilde{a}_1,\ldots,
\tilde{a}_{n-1},\tilde{a}_{n-1},\tilde{a}_n)
\]
is identical to the orthogonal complement of the zero space
\[
\left(
\prod_{l=1}^{n}\tau_{\tilde{a}_l}^2
\right)
H^2({\bD}).
\]

The identity \eqref{Hermit} implies that the rational function
\[
P_{\{\tilde{a}_1,\tilde{a}_1,\ldots,
\tilde{a}_{n-1},\tilde{a}_{n-1},\tilde{a}_n\}}(f)
\]
interpolates both the Hardy space function $f$ and its first-order derivative at the optimally selected points
\[
\tilde{a}_1,\ldots,\tilde{a}_n
\]
under \eqref{uunder}. The numerical examples given in \cite{QWQW} show that the reconstruction speed of \eqref{DAFD} is approximately twice that of \eqref{AFD1}.

The advantages of DAFD include the following. First, the two algorithms have the same level of computational complexity, since both require the selection of $n$ optimal parameters, whereas in DAFD the approximating rational function is the orthogonal projection onto a larger space whose dimension is approximately twice that used in AFD. Second, DAFD provides a Hermite-type interpolation property that leads to an overall improvement in approximation. Third, from the viewpoint of digital signal processing, the remainders in DAFD possess zeros of higher order. This last point is also conceptually related to unwinding Blaschke expansions; see \cite{CS,Qinner}.

AFD-type sparse representations rely heavily on TM systems in Hardy spaces. Classical TM systems, which possess particularly simple and elegant constructive features, are obtained by applying the Gram--Schmidt process to multiple Szeg\"o kernels.

There have been attempts to construct TM systems for one and several complex variables, in other classical and non-classical domains, and for complex- or matrix-valued functions. To the knowledge of the author, these efforts have not yet produced demonstrative results. The Gram--Schmidt process applied to kernels in other RKHSs is generally highly complicated and therefore difficult to implement.

The aim of the present paper is to extend TM systems and related approximation subjects from Hardy spaces to what we call \emph{boundary weighted Hardy spaces} (BWHSs). We establish the associated boundary weighted AFD and boundary weighted double-zero AFD algorithms, abbreviated as BW-AFD and BW-DAFD, respectively.

Various aspects of BWHSs have been investigated by a number of authors, including H. Helson and G. Szeg\"o \cite{HS}, A. Hartmann and K. Seip, D. Sarason, D. Hitt, P. G\'erard, and A. Pushnitski. Such spaces have appeared, without a standard name for the function space, in the literature on Hardy spaces, Toeplitz and Hankel operators, model spaces, invariant subspaces, and related topics. The existing literature, however, has paid relatively little attention to the reproducing-kernel aspect.

In this paper, instead of applying the Gram--Schmidt process, we use an outer-function technique to construct TM-like systems. We simply call the resulting system the TM system of $H^2(m)$, where $m$, satisfying a two-sided boundedness condition, is the weight function defining the space.

In \S 2, we define boundary weighted Hardy spaces and prove a number of their properties. In \S 3, we establish TM systems in BWHSs and the associated adaptive, or sparse, TM representation, namely BW-AFD. In \S 4, we develop the corresponding double-zero AFD theory, namely BW-DAFD. In \S 5, we examine some RKHSs as subclasses of $H^2({\bD})$ for which a kernel-TM system constructor is unavailable.  

\section{Boundary Weighted Hardy Spaces}

As a general notation, let $H_K=H_K({\bD})$ denote an RKHS with reproducing kernel $K_w$. Denote the norm-one normalization of $K_w$ by
\[
E_w=\frac{K_w}{\|K_w\|_{H_K}}.
\]
The particular case $H_{k_w}=H^2({\bD})$ corresponds to $k_w$ being the Szeg\"o kernel. The norm-one normalization of $k_w$ is denoted by $e_w$.

We mainly work with what we call a \emph{boundary weighted Hardy space} (BWHS), defined through an absolutely continuous measure
\[
d\mu(t)=m(t)\,dt
\]
on the unit circle $\partial{\bD}$, where $m$ is a Lebesgue measurable function bounded both above and below by positive constants:
\[
0<c\leq m\leq C<\infty
\quad \text{a.e.},
\]
and
\[
H^2(m)
=
\overline{\{\text{holomorphic polynomials}\}}^{L^2(m\,dt)}.
\]
For basic knowledge of $H^2(m)$, or alternatively $H_{K^m},$ or BWHS, we refer the reader to the classical reference \cite{HS}.

The basic properties of BWHSs that we will concern are contained in the following theorem.

\begin{theorem}\label{Th1}
A boundary weighted Hardy space $H^2(m)$ satisfies the following properties.

\medskip

\noindent
{\rm (I)} In the set-theoretic sense,
\[
H^2(m)=H^2.
\]
Furthermore, the two Hilbert spaces have equivalent norms. As a consequence, the point-evaluation functionals on $H^2(m)$ are bounded, implying that $H^2(m)$ is an RKHS. We denote its reproducing kernel by
\[
K^m_w(z)=K^m(z,w).
\]

\medskip

\noindent
{\rm (II)} The space $H^2(m)$ is invariant under multiplication by finite Blaschke products. More precisely, if $B$ is a finite Blaschke product and $f\in H^2(m)$, then
\[
Bf\in H^2(m).
\]

\medskip

\noindent
{\rm (III)} The norm and the inner product are given by well-defined integrals on the unit circle with respect to the measure
\[
d\mu(t)=m(t)\,dt.
\]
That is,
\begin{eqnarray}\label{pol}
\langle f,g\rangle_{H^2(m)}
=
\int_0^{2\pi}
f^\ast(e^{it})
\overline{g^\ast(e^{it})}
m(t)\,dt,
\end{eqnarray}
where $f^\ast$ and $g^\ast$ are, respectively, the non-tangential boundary limit functions of
\[
f,g\in H_{K^m}.
\]
We will use the simplified notations
\[
\langle\cdot,\cdot\rangle_m
\quad\text{and}\quad
\|\cdot\|_m.
\]

\medskip

\noindent
{\rm (IV)} The space $H^2(m)$ is an RKHS with reproducing kernel
\[
K^m_w(z)
=
\frac{1}{h(z)\overline{h(w)}}
\frac{1}{1-z\overline{w}},
\]
where $h$ is an outer function in ${\bD}$, and the values $|h(z)|$, $z\in{\bD}$, are uniformly bounded above and below by positive constants. Furthermore, the \emph{boundary vanishing condition} (BVC) holds; that is,
\[
\lim_{|w|\to1}
|\langle f,E^m_w\rangle_m|
=
0.
\]
\end{theorem}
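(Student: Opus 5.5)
The whole argument rests on the two-sided bound $0<c\le m\le C$ a.e. and on the outer function built from $\log m$. First I will establish (I) and (III) at once. For a holomorphic polynomial $p$,
\[
c\|p\|_{H^2}^2\le \int_0^{2\pi}|p(e^{it})|^2m(t)\,dt\le C\|p\|_{H^2}^2,
\]
where $\|p\|_{H^2}^2=\int_0^{2\pi}|p(e^{it})|^2dt$ (up to the normalizing constant). So the $L^2(m\,dt)$-closure and the $L^2(dt)$-closure of the polynomials coincide as subsets of $L^2(\partial{\bD})$. The latter closure is the space of boundary functions of $H^2$. Identifying each boundary function with its Poisson/Cauchy extension, we get $H^2(m)=H^2$ as sets, with equivalent norms. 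The inner product is the integral \eqref{pol} of the non-tangential boundary values, and this integral is well defined because $f^\ast,g^\ast\in L^2\subset L^2(m\,dt)$. For $w\in{\bD}$,
\[
|f(w)|\le \|f\|_{H^2}\,\|k_w\|_{H^2}\le c^{-1/2}\|f\|_m\,(1-|w|^2)^{-1/2},
\]
so point evaluations are bounded and $H^2(m)$ is an RKHS.

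(II) follows from the same picture. A finite Blaschke product $B$ is unimodular on $\partial{\bD}$ and analytic across it, so $Bf\in H^2$ and $\|Bf\|_m=\|f\|_m$. Since $H^2(m)=H^2$ as sets by (I), we get $Bf\in H^2(m)$. In fact multiplication by $B$ is an isometry of $H^2(m)$.

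For (IV), I take $h$ to be the outer function with $|h^\ast|^2=m$ a.e.:
\[
h(z)=\exp\Big(\frac{1}{4\pi}\int_0^{2\pi}\frac{e^{it}+z}{e^{it}-z}\log m(t)\,dt\Big).
\]
Then $\log|h(z)|=\tfrac12 P[\log m](z)$ is a Poisson average of a function with values in $[\tfrac12\log c,\tfrac12\log C]$. Hence $\sqrt c\le |h(z)|\le\sqrt C$ on ${\bD}$, so both $h$ and $1/h$ are bounded holomorphic functions. The map $U:f\mapsto hf$ sends $H^2(m)$ into $H^2$, and $\|hf\|_{H^2}^2=\int|f^\ast|^2m\,dt=\|f\|_m^2$. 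It is onto because $g/h\in H^2$ for every $g\in H^2$. So $U$ is unitary. Transporting the reproducing property gives
\[
f(z)=\frac{(hf)(z)}{h(z)}=\frac{1}{h(z)}\langle hf,k_z\rangle_{H^2}=\Big\langle f,\frac{k_z}{\overline{h(z)}\,h}\Big\rangle_m ,
\]
which is the stated kernel $K^m_w(z)=\frac{1}{h(z)\overline{h(w)}}\frac{1}{1-z\overline w}$.

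The boundary vanishing condition is the step that needs a genuine argument. Here $\|K^m_w\|_m^2=K^m_w(w)=|h(w)|^{-2}(1-|w|^2)^{-1}$, so
\[
|\langle f,E^m_w\rangle_m|=|f(w)|\,|h(w)|\,\sqrt{1-|w|^2}\le \sqrt C\,|(hf)(w)|\,|h(w)|^{-1}\sqrt{1-|w|^2}.
\]
Up to bounded factors this is $|\langle hf,e_w\rangle_{H^2}|=|g(w)|\sqrt{1-|w|^2}$ with $g=hf\in H^2$. It therefore suffices to prove the classical statement that $\sqrt{1-|w|^2}\,g(w)\to0$ as $|w|\to1$ for every $g\in H^2$. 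I will prove it by density. For a polynomial $p$ the quantity is at most $\sup|p|\sqrt{1-|w|^2}\to0$. For a general $g$, pick a polynomial $p$ with $\|g-p\|_{H^2}<\varepsilon$. Then the contribution of $g-p$ is at most $\varepsilon$, by the Cauchy--Schwarz bound $|(g-p)(w)|\sqrt{1-|w|^2}\le\|g-p\|_{H^2}$. Hence the $\limsup$ is at most $\varepsilon$ for every $\varepsilon$, which gives the BVC.
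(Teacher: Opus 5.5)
Your proof is correct and follows essentially the paper's route: norm equivalence from $c\le m\le C$ gives (I)--(III), the outer function $h$ with $|h^\ast|^2=m$ gives the kernel, and density of polynomials gives the BVC. The differences are cosmetic. You organize the kernel computation and the BVC through the unitary map $f\mapsto hf$ onto $H^2$, whereas the paper checks $\langle f,K^m_w\rangle_m=f(w)$ directly and runs the density argument inside $H^2(m)$ using $\|K^m_w\|_m\to\infty$. You also justify $\sqrt c\le|h|\le\sqrt C$ via the Poisson average, which the paper only asserts.
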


\noindent
{\bf Proof.}
Since $m$ is bounded below by a positive constant, a sequence of holomorphic polynomials $p_n$ can converge to $f$ in $L^2(m)$ only if $p_n$ converges to $f$ in $L^2(\partial{\bD})$. Thus,
\[
f\in H^2(m)
\]
implies that $f\in H^2$, and
\[
\|f\|_{H^2}\leq C\|f\|_m.
\]
By the two-sided boundedness of $m$, the converse argument also holds. Consequently, the two sets $H^2(m)$ and $H^2$ coincide, although they are equipped with different but equivalent norms. This proves (I).

Property (II) states that $H^2(m)$ is invariant under multiplication by finite Blaschke products. In fact, this is a consequence of (I). Since $H^2$ is invariant under multiplication by finite Blaschke products,
\[
f\in H^2(m)=H^2
\]
implies
\[
fB\in H^2=H^2(m).
\]

Property (III) follows from the dominated convergence theorem. We now prove (IV). The two-sided almost-everywhere boundedness of $m$ implies that
\[
\log m\in L^1(\partial{\bD}).
\]
Therefore, there exists an outer function $h$ such that
\[
|h^\ast(e^{it})|^2=m(t)
\]
for almost every $t\in[0,2\pi)$. See \cite{Du}. The outer function $h$ is given by
\[
h(z)
=
\exp\left\{
\frac{1}{4\pi}
\int_0^{2\pi}
\frac{e^{it}+z}{e^{it}-z}
\log m(t)\,dt
\right\},
\]
and is bounded above and below by positive constants for all $z\in{\bD}$.

The point-evaluation functionals on $H^2(m)$ are continuous, and hence $H^2(m)$ is an RKHS. We next show that
\begin{eqnarray}\label{ker}
K^m_w(z)
=
\frac{1}{h(z)\overline{h(w)}}
\frac{1}{1-\overline{w}z}
\end{eqnarray}
is the reproducing kernel of $H^2(m)$.

For $f\in H^2(m)$, using the reproducing property of the Szeg\"o kernel, we obtain
\begin{eqnarray*}
\langle f,K^m_w\rangle_m
&=&
\int_0^{2\pi}
f(e^{it})
\overline{
\frac{1}{h(e^{it})\overline{h(w)}}
\frac{1}{1-\overline{w}e^{it}}
}
m(t)\,dt
\\
&=&
\frac{1}{h(w)}
\int_0^{2\pi}
f(e^{it})h(e^{it})
\overline{
\frac{1}{1-\overline{w}e^{it}}
}
\,dt
\\
&=&
f(w).
\end{eqnarray*}

The BVC assertion in (IV) follows from the fact that $H^2(m)$ contains a dense subset of bounded functions and
\[
\lim_{|w|\to1}\|K^m_w\|_m=\infty.
\]
Indeed, for any $\epsilon>0$, there exists a bounded function, for instance a polynomial $g\in H^2(m)$, such that
\[
\|f-g\|_m<\epsilon.
\]
By the Cauchy--Schwarz inequality,
\[
|\langle f,E^m_w\rangle_m|
\leq
\|f-g\|_m
+
\frac{\sup_{z\in{\bD}}|g(z)|}{\|K^m_w\|_m}
<
2\epsilon
\]
when $|w|$ is sufficiently close to $1$. Owing to the uniform upper boundedness of $|h(w)|$ in ${\bD}$, we have
\[
\lim_{|w|\to1}\|K^m_w\|_m^2
=
\lim_{|w|\to1}
\frac{1}{|h(w)|^2}
\frac{1}{1-|w|^2}
=
\infty.
\]
Therefore, $H^2(m)$ satisfies BVC. The proof is complete. \qed

\def\ba{\bf a}

\begin{remark}
The BVC property in (IV) is analogous to the Riemann--Lebesgue lemma for Fourier coefficients. The importance of (IV) is that BVC implies the \emph{Maximal Selection Principle} (MSP); that is, there exists $w_0\in{\bD}$ such that
\[
|\langle f,E^m_{w_0}\rangle_m|
=
\sup
\left\{
|\langle f,E^m_w\rangle_m|:\ w\in{\bD}
\right\}.
\]
In the general setting, BVC also implies the attainability of an optimal parameter at an interior point. Such attainability not only determines an adaptive TM system, although it may not be unique, but also guarantees the existence of an approximation by a double-zero TM system. In the classical Hardy space case, the latter has been observed to be more effective than AFD; see \cite{QWQW}.
\end{remark}

\bigskip

\begin{remark}
Compared with the other properties, (IV) is not indispensable. If BVC fails, MSP may be replaced by a weaker form of MSP; see \cite{DT}. The following version is the one most commonly used. For any $\rho\in(0,1)$, there exists $w_0\in{\bD}$ such that
\begin{eqnarray}\label{sup}
|\langle f,E_{w_0}\rangle_{H_K}|
>
\rho
\sup
\left\{
|\langle f,E_w\rangle_{H_K}|:\ w\in{\bD}
\right\}.
\end{eqnarray}
This is usually called the \emph{weak Maximal Selection Principle} (W-MSP), and it also gives rise to an effective sparse representation of $f$. Although the attainability of an optimal parameter is not required in this case, a function value close to the supremum in \eqref{sup} still has to be found.
\end{remark}

\bigskip

Correspondingly, there are weaker forms of the conditions (I)--(IV), which, in general notation, are listed as follows. Let ${\mathcal H}$ be a general Hilbert space.

\medskip

\noindent
{\rm (I)$^\ast$}
In the set-theoretic sense,
\[
{\mathcal H}\subset H^2,
\]
and
\[
\|f\|_{H^2}
\leq
C\|f\|_{\mathcal H},
\qquad
\forall f\in{\mathcal H}.
\]
Hence, ${\mathcal H}$ is an RKHS and in the set-theoretic sense
\[
{\mathcal H}=H_K.
\]

\medskip

\noindent
{\rm (II)$^\ast$}
For every
\[
(a_1,\ldots,a_{n-1})\in{\bD}^{n-1},
\]
the finite Blaschke product
\[
\Psi_n(z)
=
\prod_{k=1}^{n-1}\tau_{a_k}(z)
\]
belongs to ${\mathcal H}$, and ${\mathcal H}$ forms an algebra over the complex field.

\medskip

\noindent
{\rm (III)$^\ast$}
The TM-like system
\[
{\ }^{\ba}\{B_n\}_n,
\]
defined by \eqref{read}, is orthonormal in ${\mathcal H}$.

\medskip

\noindent
{\rm (IV)$^\ast$}
The reproducing kernel of $H_K$ satisfies BVC.

   \section{TM Systems and AFD Sparse Representations in $H^2(m)$}

For every RKHS, $H_K$, one can formally define a TM-like system in the following manner. Let
${\bf a}=(a_1,\ldots,a_n,\ldots)$
be a finite or infinite sequence of complex numbers in the unit disc ${\bD}$, allowing repetitions. We define the TM-like system generated by ${\bf a}$ in $H_K$ as
\begin{eqnarray}\label{read}
\{B_n\}_n,
\qquad
\text{where}
\qquad
B_n(z)
=
E_{a_n}(z)
\prod_{k=1}^{n-1}\tau_{a_k}(z),
\end{eqnarray}
where, for any $a\in{\bD}$, $\tau_a(z)$ is the canonical M\"obius transform with pole $a$, and $E_a$ is the normalized reproducing kernel at $a$. To indicate the depende withnce on ${\ba}$, we also write $\{B_n\}_n$ as
\[
{\ }^{\ba}\{B_n\}_n,
\]
and $B_n$ as ${\ }^{\ba}B_n$, and so forth.

\begin{defn}
If ${\mathcal H}$ satisfies the conditions {\rm (I)$^\ast$}, {\rm (II)$^\ast$}, and {\rm (III)$^\ast$}, then ${\mathcal H}$ is an RKHS, ${\mathcal H}=H_K,$ and $H_K$ is equipped with kernel-TM systems.
\end{defn}

\begin{theorem}\label{weak}
\begin{enumerate}

\item
If such a defined $H_K$ kernel-TM system is a basis of $H_K$, then ${\ba}$ satisfies the non-Blaschke condition.

\item
The conditions {\rm (I)$^\ast$}, {\rm (II)$^\ast$}, {\rm (III)$^\ast$}, and {\rm (IV)$^\ast$} allow construction of an AFD made by an $H_K$ kernel-TM system.

\item
If, in addition, the reverse inequality
\[
\|\cdot\|_{H_K}
\leq
C\|\cdot\|_{H^2}
\]
holds, then ${\ }^{\ba}\{B_n\}_n$ is a basis of $H_K$ whenever ${\ba}$ satisfies the non-Blaschke condition.
\end{enumerate}
\end{theorem}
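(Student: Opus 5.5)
The three assertions of Theorem \ref{weak} are of different natures, so I plan to treat them separately. One structural fact will be used throughout. Since ${\ }^{\ba}B_n=E_{a_n}\Psi_n$ and $\Psi_n=\prod_{k=1}^{n-1}\tau_{a_k}$, the function ${\ }^{\ba}B_n$ vanishes at $a_1,\ldots,a_{n-1}$, counted with multiplicity. On the other hand, ${\ }^{\ba}B_n$ does not vanish at the first new point. In particular, if $a_n$ is distinct from $a_1,\ldots,a_{n-1}$, then $E_{a_n}(a_n)=\|K_{a_n}\|_{H_K}>0$ and $\Psi_n(a_n)\neq0$, so ${\ }^{\ba}B_n(a_n)\neq 0$. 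For repeated points the analogous statement holds for the first non-vanishing derivative. Consequently, the matrix of Hermite data of ${\ }^{\ba}B_1,\ldots,{\ }^{\ba}B_n$ at the points $a_1,\ldots,a_n$ is triangular with nonzero diagonal entries.

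\emph{Part (1).} I argue by contraposition. Suppose $\sum(1-|a_k|)<\infty$, and let $B$ be the infinite Blaschke product with zero sequence ${\ba}$. I look for a nonzero $f\in H_K$ whose Hermite data vanish at all $a_k$.

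\begin{itemize}
\item My first candidate is $f=Bg$ for some $g\in H_K$. To check $f\in H_K$, I would use that $\Psi_n g\in H_K$ by (II)$^\ast$. I would then try to show that $\Psi_n g$ is bounded in $H_K$ and converges to $Bg$ in $H^2$, and pass to a weak limit.
\item If $\{{\ }^{\ba}B_n\}$ were a basis, then $f=\sum c_n\,{\ }^{\ba}B_n$ with convergence in $H_K$, hence in $H^2$ by (I)$^\ast$, hence pointwise.
\item By the triangular structure above, the coefficients $c_n$ are determined inductively by the Hermite data of $f$ at $a_1,\ldots,a_n$. These data all vanish, so $c_n=0$ for every $n$, and therefore $f=0$, a contradiction.
\end{itemize}

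I expect the main obstacle to be producing such an $f$, that is, establishing the uniform $H_K$-bound for $\Psi_n g$. If this fails, I would replace $Bg$ by an $H_K$-limit of the finite-step residuals $g-\sum_{k\le n}\langle g,{\ }^{\ba}B_k\rangle\,{\ }^{\ba}B_k$. These residuals lie in $H_K$ and vanish at $a_1,\ldots,a_n$, so their limit would vanish on all of ${\ba}$.

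\emph{Part (2).} I will follow the classical AFD argument, but formulate it with orthogonal residuals rather than with division, so that membership in $H_K$ is automatic. Set $g_1=f$. Given $a_1,\ldots,a_{n-1}$, define
\[
g_n=f-\sum_{k<n}\langle f,{\ }^{\ba}B_k\rangle_{H_K}\,{\ }^{\ba}B_k ,
\]
and choose $a_n$ to maximize $a\mapsto|\langle g_n,E_a\Psi_n\rangle_{H_K}|$. Attainability of the maximum in ${\bD}$ follows from (IV)$^\ast$, applied together with the algebra property (II)$^\ast$ and the boundedness of $\Psi_n$ near the circle; this is the BVC argument of Theorem \ref{Th1}(IV). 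Orthonormality (III)$^\ast$ gives the Bessel identity
\[
\|g_{n+1}\|^2=\|g_n\|^2-|\langle g_n,{\ }^{\ba}B_n\rangle|^2 .
\]
To show convergence, I assume $g_n\to g\neq0$ weakly. Then I derive a contradiction with the maximal selection, exactly as in \cite{QWa1}, by comparing $|\langle g_n,{\ }^{\ba}B_n\rangle|$ with the value at some fixed point $b$ where $|\langle g,E_b\Psi\rangle|>0$.

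\emph{Part (3).} Under the reverse inequality, the norms of $H_K$ and $H^2$ are equivalent, so closed spans coincide in the two topologies. Let $f\in H_K$ be orthogonal to every ${\ }^{\ba}B_n$; it suffices to show that $f=0$.

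\begin{itemize}
\item Inverting the triangular relation above, I show that $K_{a_1},\ldots,K_{a_n}$ (multiple kernels at repeated points) lie in $\mathrm{span}\{{\ }^{\ba}B_1,\ldots,{\ }^{\ba}B_n\}$. This is a dimension count: both spaces are $n$-dimensional, and a function in $H_K$ is $H_K$-orthogonal to the first exactly when its Hermite data at $a_1,\ldots,a_n$ vanish.
\item Hence $f$ vanishes on ${\ba}$ with multiplicity. Since $f\in H^2$ and ${\ba}$ violates the Blaschke condition, $f\equiv0$.
\end{itemize}

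The delicate step here is identifying the two $n$-dimensional spans. I would prove it by checking that each ${\ }^{\ba}B_k$ is $H_K$-orthogonal to the subspace $\{g:\ g \text{ vanishes at } a_1,\ldots,a_n\}$, using the orthonormality (III)$^\ast$ inductively.
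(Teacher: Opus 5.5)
The paper gives no proof of Theorem~\ref{weak}. It only verifies (III)$^\ast$ for $H^2(m)$, where the identity $\langle f,g\rangle_m=\langle fh,gh\rangle_{H^2}$ reduces everything to the classical case. So your proposal has to stand on its own, and part (1) does not yet.

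\textbf{Part (1).} The crux is producing a nonzero $f\in H_K$ vanishing on a Blaschke sequence $\mathbf a$. You leave open exactly the bound $\sup_n\|\Psi_n g\|_{H_K}<\infty$ that would produce it. The fallback is circular. The residuals $g-\sum_{k\le n}\langle g,{}^{\mathbf a}B_k\rangle\,{}^{\mathbf a}B_k$ converge to $g-Pg$, where $P$ is the projection onto the closed span of the system. Under the basis hypothesis you want to refute, this limit is $0$, and the Blaschke condition is never used. Moreover, that these residuals vanish at $a_1,\dots,a_n$ already presupposes the span identity of part (3), which you justify only under the reverse inequality.

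The missing idea is to apply (III)$^\ast$, which is imposed for every generating sequence, to auxiliary sequences with a free point appended.
\begin{itemize}
\item $\Psi_nE_b$ is the $n$-th member of the kernel-TM system generated by $(a_1,\dots,a_{n-1},b)$. Hence $\|\Psi_nE_b\|_{H_K}=1$ and $\Psi_nE_b\perp{}^{\mathbf a}B_1,\dots,{}^{\mathbf a}B_{n-1}$.
\item Along a subsequence, $\Psi_n\to cB$ locally uniformly with $|c|=1$; the partial products carry unimodular constants that need not converge.
\item So a weak limit point of $\Psi_nE_b$ is $cBE_b$. This is a nonzero element of $H_K$ orthogonal to every ${}^{\mathbf a}B_k$.
\end{itemize}
This is your first candidate with $g=E_b$, and it settles (1) even without the Hermite-triangularity step.

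\textbf{Part (3).} The same device supplies the step you flag. For every $b$, $\Psi_{n+1}K_b\perp{}^{\mathbf a}B_1,\dots,{}^{\mathbf a}B_n$. Under the reverse inequality, $H_K=H^2$ as sets. Hence the functions in $H_K$ vanishing at $a_1,\dots,a_n$ (with multiplicity) are exactly $\Psi_{n+1}H_K$, and $g\mapsto\Psi_{n+1}g$ is bounded. Density of the kernels then gives the required orthogonality, and your dimension count finishes.

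\textbf{Part (2).} This part has a real gap. The Hardy-space convergence argument you import needs two facts.
\begin{itemize}
\item For non-Blaschke $\mathbf a$: completeness of $\{{}^{\mathbf a}B_k\}$. You have this only under the reverse inequality, which (2) does not assume.
\item For Blaschke $\mathbf a$: a nonzero $g\perp\{{}^{\mathbf a}B_k\}$ satisfies $\langle g,BE_b\rangle\neq0$ for some $b$. You have not established this at all.
\end{itemize}
What closes this is that (III)$^\ast$ makes every $M_\Psi\colon g\mapsto\Psi g$ an isometry.
\begin{itemize}
\item $M_\Psi$ is bounded by (II)$^\ast$ and the closed graph theorem.
\item (III)$^\ast$ gives $\|\Psi K_w\|=\|K_w\|$. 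So $\langle(M_\Psi^\ast M_\Psi-I)K_w,K_z\rangle$ is holomorphic in $z$, antiholomorphic in $w$, and zero on $z=w$; hence it vanishes identically.
\item The Wold decomposition of $M_z$ then gives $H_K=\phi H^2$ with $\|\phi u\|_{H_K}=\|u\|_{H^2}$. The shift is pure because $H_K\subset H^2$. It has multiplicity one because, for orthonormal wandering vectors $\phi_1,\phi_2$, the isometry $(u_1,u_2)\mapsto u_1\phi_1+u_2\phi_2$ from $H^2\oplus H^2$ into $H_K$ would annihilate $(\phi_2,-\phi_1)$.
\end{itemize}
This transports (1)--(3) from $H^2$, just as $f\mapsto fh$ does for $H^2(m)$. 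It also shows that the reverse inequality in (3) is not needed.

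Finally, for attainability, write $\langle g_n,\Psi_nE_a\rangle=\langle M_{\Psi_n}^\ast g_n,E_a\rangle$ and apply (IV)$^\ast$ to $M_{\Psi_n}^\ast g_n$. The boundedness of $\Psi_n$ near the circle is not what makes this step work.
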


As a consequence of Theorem \ref{weak}, we obtain the following result.

\begin{theorem}
 The space  $H_{K^m}$ admits a kernel-TM system
\[
\{B_n\}_n,
\qquad
B_n(z)
=
E^m_{a_n}(z)
\prod_{k=1}^{n-1}\tau_{a_k}(z).
\]
\end{theorem}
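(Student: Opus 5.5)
We verify directly that $H^2(m)$ satisfies (I)$^\ast$, (II)$^\ast$ and (III)$^\ast$; by the Definition this means $H_{K^m}$ is equipped with kernel-TM systems. Theorem \ref{weak} then yields the AFD consequences, since (IV)$^\ast$ is the BVC of Theorem \ref{Th1}(IV) and the norms are equivalent. Conditions (I)$^\ast$ and (II)$^\ast$ are essentially immediate. Theorem \ref{Th1}(I) gives $H^2(m)=H^2$ with equivalent norms, in particular $\|f\|_{H^2}\le C\|f\|_m$. Every finite Blaschke product $\Psi_n$ lies in $H^\infty\subset H^2=H^2(m)$. Multiplication by finite Blaschke products preserves the space by Theorem \ref{Th1}(II). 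For the algebra requirement I would read (II)$^\ast$ as closure under multiplication by bounded holomorphic functions such as $\Psi_n$ and $E^m_a$ (the latter is bounded because $h$ is bounded below). The product of two $H^2$ functions need only lie in $H^1$, so this weaker reading is needed, and it is all that is used in defining $B_n$.

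The substance is (III)$^\ast$, orthonormality of $B_n=E^m_{a_n}\prod_{k<n}\tau_{a_k}$ in $H^2(m)$. Using Theorem \ref{Th1}(IV), $K^m_a(z)=\frac{1}{h(z)\overline{h(a)}}k_a(z)$, and so
\[
\|K^m_a\|_m^2=K^m_a(a)=\frac{1}{|h(a)|^2(1-|a|^2)}.
\]
Hence
\[
E^m_a=\frac{\overline{h(a)}}{|h(a)|}\,\frac{1}{h}\,e_a,
\]
a unimodular constant times $e_a/h$. Therefore
\[
B_n=\frac{\gamma_n}{h}\,{\ }^{\ba}B^{\rm cl}_n,\qquad |\gamma_n|=1,
\]
where ${\ }^{\ba}B^{\rm cl}_n$ is the classical TM function. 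Both $1/h$ and the classical functions are bounded, so each $B_n$ lies in $H^2(m)$.

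Next we use formula \eqref{pol} of Theorem \ref{Th1}(III) together with $|h^\ast|^2=m$ a.e. This gives
\[
\langle B_n,B_l\rangle_m=\gamma_n\overline{\gamma_l}\int_0^{2\pi}{\ }^{\ba}B^{\rm cl}_n(e^{it})\overline{{\ }^{\ba}B^{\rm cl}_l(e^{it})}\,\frac{m(t)}{|h^\ast(e^{it})|^2}\,dt=\gamma_n\overline{\gamma_l}\,\langle {\ }^{\ba}B^{\rm cl}_n,{\ }^{\ba}B^{\rm cl}_l\rangle_{H^2}=\delta_{nl},
\]
by the classical orthonormality of TM systems recalled in \S1. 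The same computation shows more generally that $f\mapsto f/h$ (after the kernel normalization) is a unitary from $H^2$ onto $H^2(m)$. It intertwines multiplication by $\tau_a$ and carries $e_a$ to $E^m_a$ up to phase. This gives a conceptual reason why the kernel-TM construction transfers.

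The main obstacle I expect is bookkeeping rather than ideas. The normalization constant $\|K^m_a\|_m$ must be computed from the kernel formula so that the phase $\overline{h(a)}/|h(a)|$ cancels. One must also check that the boundary values of $1/h$ satisfy $|1/h^\ast|^2=1/m$ a.e. This holds because $h$ is outer and bounded above and below, so $1/h$ is again outer with the reciprocal boundary modulus. Finally, the definition of the multiple-kernel variant for repeated $a_k$ plays no role here, since \eqref{read} uses only $E_{a_n}$.
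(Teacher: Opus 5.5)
Your proof is correct, but it reaches (III)$^\ast$ by a different route than the paper. The paper stays inside $H^2(m)$. Because $|\tau_{a_k}(e^{it})|=1$, the shared Blaschke factors cancel in the boundary integral \eqref{pol}, which gives $\|B_n\|_m=\|E^m_{a_n}\|_m=1$. For $n>l$ one is left with $\langle E^m_{a_n}\prod_{k=l}^{n-1}\tau_{a_k},E^m_{a_l}\rangle_m$. By the reproducing property this is a multiple of the value of $E^m_{a_n}\prod_{k=l}^{n-1}\tau_{a_k}$ at $a_l$, hence zero since $\tau_{a_l}(a_l)=0$. That argument uses only the weighted-$L^2$ form of the inner product and the reproducing property. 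It needs neither the explicit kernel of Theorem \ref{Th1}(IV) nor classical TM orthonormality, so it is the classical proof re-run verbatim and applies to any RKHS whose norm is an $L^2(\mu)$ norm on the circle.

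You instead use the outer function to show that $f\mapsto f/h$ is a unitary from $H^2$ onto $H^2(m)$. It commutes with multiplication by each $\tau_a$ and sends $e_a$ to $E^m_a$ up to a unimodular factor, so you can pull back classical orthonormality. This costs you Theorem \ref{Th1}(IV), but it gives more. The same unitary transfers completeness exactly under the non-Blaschke condition, which the paper instead derives from norm equivalence. It also shows $|\langle f,E^m_a\rangle_m|=|\langle fh,e_a\rangle_{H^2}|$, so the $H^2(m)$-AFD of $f$ is just the classical AFD of $fh$ divided by $h$.

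Two small remarks. First, from $\|K^m_a\|_m=(|h(a)|\sqrt{1-|a|^2})^{-1}$ one gets $E^m_a=\frac{|h(a)|}{\overline{h(a)}}\,\frac{e_a}{h}=\frac{h(a)}{|h(a)|}\,\frac{e_a}{h}$. Your phase should therefore be $h(a)/|h(a)|$, not its conjugate; since you only use $|\gamma_n|=1$, nothing is affected. Second, you are right that $H^2(m)=H^2$ is not closed under pointwise products, so (II)$^\ast$ must be read as invariance under multiplication by finite Blaschke products (Theorem \ref{Th1}(II)). The paper's proof takes (I)$^\ast$ and (II)$^\ast$ for granted in that sense and checks only (III)$^\ast$.
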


\noindent
{\bf Proof.}
It amount to verifying the orthonormality condition (III)* of the TM-like system in $H^2(m).$ Let
\[
\{B_n\}_{n=1}^{\infty}
\]
be generated by ${\bf a}$. Since
\[
|\tau_{a_k}(e^{it})|=1,
\]
we have
\[
\begin{aligned}
\langle B_n,B_n\rangle_m
&=
\int_0^{2\pi}
E^m_{a_n}(e^{it})
\prod_{k=1}^{n-1}\tau_{a_k}(e^{it})
\overline{
E^m_{a_n}(e^{it})
\prod_{k=1}^{n-1}\tau_{a_k}(e^{it})
}
m(t)\,dt
\\
&=
\|E^m_{a_n}\|_m^2
=
1.
\end{aligned}
\]

If $n=m+l$ with $l>0$, then, using the reproducing kernel property, we obtain
\[
\begin{aligned}
\langle B_n,B_m\rangle_m
&=
\int_0^{2\pi}
E^m_{a_n}(e^{it})
\prod_{k=m}^{n-1}\tau_{a_k}(e^{it})
\overline{E^m_{a_m}(e^{it})}
m(t)\,dt
\\
&=
\left\langle
E^m_{a_n}
\prod_{k=m}^{n-1}\tau_{a_k},
E^m_{a_m}
\right\rangle_m
\\
&=
0,
\end{aligned}
\]
since
\[
\tau_{a_m}(a_m)=0.
\]

It follows directly from the norm equivalence
\[
\|f\|_{H^2}
\approx
\|f\|_m
\]
that ${\ }^{\ba}\{B_n\}_n$ is complete in $H_{K^m}$ if and only if ${\ba}$ satisfies
\[
\sum_{k=1}^{\infty}(1-|a_k|)=\infty,
\]
which is interpreted as the non-Blaschke condition. The proof is complete.\qed

Polarization of \eqref{pol} gives, for $f,g\in H_{K^m}$,
\[
\langle f,g\rangle_m
=
\langle fh,gh\rangle_{H^2({\bD})}.
\]
One may wonder whether this isometry yields anything new. It does, because, for
\[
f\in H^2({\bD}),
\]
or, more precisely, $f\in H^2(m)$, one may seek a decomposition different from classical AFD.

The corresponding $H^2(m)$-AFD begins by finding $a^m_1\in{\bD}$ such that, for $f_1=f$,
\begin{eqnarray}\label{hold}
a^m_1
=
\arg\max
\left\{
|\langle f_1,E^m_a\rangle_m|:\ a\in{\bD}
\right\},
\end{eqnarray}
where
\begin{eqnarray}\label{gi}
E^m_a(z)
=
\frac{
|h(a)|\sqrt{1-|a|^2}
}{
h(z)\overline{h(a)}(1-\overline{a}z)
}
\end{eqnarray}
is the normalized reproducing kernel.

A direct computation using the reproducing property gives
\begin{eqnarray*}
|\langle f_1,E^m_a\rangle_m|
=
|h(a)|\sqrt{1-|a|^2}\,|f_1(a)|.
\end{eqnarray*}

The BVC of $H^2(m)$ implies that there exists $a^m_1\in{\bD}$ satisfying \eqref{hold}. By the reproducing kernel property, the orthogonal complement term
\[
f_1(z)
-
\langle f_1,E^m_{a^m_1}\rangle_m
E^m_{a^m_1}(z)
\]
vanishes at $z=a^m_1$. Therefore,
\[
f_2(z)
=
\frac{
f_1(z)
-
\langle f_1,E^m_{a^m_1}\rangle_m
E^m_{a^m_1}(z)
}{
\tau_{a^m_1}(z)
}
\in H^2(m),
\]
where $\tau_{a^m_1}(z)$ is the canonical M\"obius transform with pole $a^m_1$.

We therefore obtain the first step of the $H^2(m)$-AFD decomposition:
\[
f(z)
=
\langle f_1,E^m_{a^m_1}\rangle_m
E^m_{a^m_1}(z)
+
f_2(z)\tau_{a^m_1}(z).
\]

Similarly, we decompose $f_2\in H_{K^m}$ in the same manner, and continue iteratively. This yields
\begin{eqnarray}\label{stead}
f_{k+1}(z)
=
\frac{
f_k(z)
-
\langle f_k,E^m_{a^m_k}\rangle_m
E^m_{a^m_k}(z)
}{
\tau_{a^m_k}(z)
},
\end{eqnarray}
where $a^m_k$ is an optimal parameter selected at the $k$-th step:
\begin{eqnarray}\label{under}
a^m_k
=
\arg\max
\left\{
|\langle f_k,E^m_a\rangle_m|:\ a\in{\bD}
\right\}.
\end{eqnarray}

\begin{theorem}\label{6}
Let $f\in H^2({\bD})$. Then the following expansion holds in the $H_{K^m}$-norm:
\begin{eqnarray}\label{AFD}
f(z)
=
\sum_{k=1}^{\infty}
\langle f_k,E^m_{a^m_k}\rangle_m
{\ }^{\bf a}B_k(z),
\end{eqnarray}
where the TM system
\[
\{{\ }^{\bf a}B_n\}
\]
is generated by the optimally selected parameter sequence
\[
{\bf a}
=
{\bf a}^m
=
\{a^m_n\},
\]
and
\[
{\ }^{\bf a}B_n(z)
=
E^m_{a^m_n}(z)
\prod_{l=1}^{n-1}\tau_{a^m_l}(z).
\]
\end{theorem}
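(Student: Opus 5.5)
We follow the classical AFD convergence proof of \cite{QWa1}, transplanted to $H^2(m)$ through Theorem \ref{Th1}, the orthonormality of the kernel-TM system established in the preceding theorem, and the unitary map $f\mapsto fh$ from $H^2(m)$ onto $H^2$.

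\emph{Step 1: finite-step identity.} Iterating \eqref{stead} $n$ times gives
\[
f=\sum_{k=1}^{n}\langle f_k,E^m_{a^m_k}\rangle_m\,{\ }^{\bf a}B_k+f_{n+1}\prod_{l=1}^{n}\tau_{a^m_l}.
\]
Here each $f_{k}\in H^2(m)$, because the numerator in \eqref{stead} vanishes at $a^m_k$ and Theorem \ref{Th1}(I) lets us divide by $\tau_{a^m_k}$ inside $H^2$. Since $|\tau_a|=1$ on the circle, \eqref{pol} shows that multiplication by a finite Blaschke product is an isometry of $H^2(m)$.

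\emph{Step 2: orthogonality and coefficients.} For $k\le n$, the inner product $\langle f_{n+1}\prod_{l\le n}\tau_{a^m_l},{\ }^{\bf a}B_k\rangle_m$ reduces, after cancelling the unimodular factors $\prod_{l<k}\tau_{a^m_l}$, to $\langle g\,\tau_{a^m_k},E^m_{a^m_k}\rangle_m=g(a^m_k)\tau_{a^m_k}(a^m_k)=0$. By the reproducing property, the remainder $g_{n+1}:=f_{n+1}\prod_{l\le n}\tau_{a^m_l}$ is therefore orthogonal to ${\rm span}\{{\ }^{\bf a}B_1,\dots,{\ }^{\bf a}B_n\}$. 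Consequently $\langle f_k,E^m_{a^m_k}\rangle_m=\langle f,{\ }^{\bf a}B_k\rangle_m$, and
\[
\|f\|_m^2=\sum_{k\le n}|\langle f,{\ }^{\bf a}B_k\rangle_m|^2+\|f_{n+1}\|_m^2 .
\]
This is Bessel's identity, so $\sum_k|\langle f,{\ }^{\bf a}B_k\rangle_m|^2<\infty$, and the series in \eqref{AFD} converges in $H^2(m)$ to some $g$ with $f-g\perp{\ }^{\bf a}B_k$ for all $k$.

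\emph{Step 3: $g=f$, by contradiction (main obstacle).} Suppose $f-g\neq0$. Its coefficients vanish, so it lies in the orthocomplement of every finite section. If ${\bf a}$ is non-Blaschke, completeness of the system (preceding theorem) gives $f=g$ at once. The hard case is the Blaschke case, where completeness fails and maximality must be used, as in \cite{QWa1}.

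Pass to $H^2$ via $F=fh$. The kernel $E^m_a$ corresponds to $\frac{|h(a)|}{\overline{h(a)}}e_a$, which is a unimodular multiple of $e_a$. The remainders correspond as well: $f_{n+1}\prod\tau$ equals $Q_n f$, where $Q_n$ is the projection onto the complement of ${\rm span}\{E^m_{a_1},\dots,E^m_{a_n}\}$ (with multiplicity). Write $\psi=f-g=\lim_n Q_nf$.

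Pick $b\in\bD$ with $|\langle \psi,E^m_b\rangle_m|=\delta>0$. Next compare the quantity maximized at step $n+1$ with this test value: $|\langle f_{n+1},E^m_b\rangle_m|$ should dominate $|\langle Q_n f,E^m_b\rangle_m|$ up to a factor. The expected relation is $\langle Q_nf,E^m_b\rangle_m=\langle f_{n+1},\prod\tau\cdot E^m_b\rangle_m$, with $\|Q_n(E^m_b)\|_m\le1$. One then uses the identity that $Q_n E^m_b$, divided by $\prod\tau$, equals a normalized kernel times $\prod_l|\tau_{a_l}(b)|$. This identity holds in $H^2$ and transfers through $h$, since everything is the $H^2$ structure twisted by the outer factor. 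From it, $|\langle f_{n+1},E^m_b\rangle_m|\ge\delta'/\prod_l|\tau_{a^m_l}(b)|\ge\delta'$.

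By maximality, $|\langle f,{\ }^{\bf a}B_{n+1}\rangle_m|\ge\delta'$ for every $n$. This contradicts the square-summability obtained in Step 2, so $\psi=0$. Verifying that this Blaschke-product/kernel identity survives the weight $h$ is the step requiring the most care; I would check it by direct computation with \eqref{gi}, where the factors $h(z)^{-1}$ cancel exactly.
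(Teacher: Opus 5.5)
Your proposal is correct and follows essentially the paper's route: the paper simply defers to the convergence argument of \cite{QWa1}, and you transplant exactly that argument (finite-step identity, Bessel's identity, then maximality tested against a point $b$ with $\psi(b)\neq 0$). One small correction: the relation you need is $\langle Q_nf,E^m_b\rangle_m=\langle f_{n+1}\prod_{l\le n}\tau_{a^m_l},E^m_b\rangle_m=\prod_{l\le n}\tau_{a^m_l}(b)\,\langle f_{n+1},E^m_b\rangle_m$, not $\langle f_{n+1},\prod\tau\cdot E^m_b\rangle_m$; it is immediate from the reproducing property in $H^2(m)$, so you need neither the transfer through $h$ nor the Blaschke/non-Blaschke case split, and the resulting bound $|\langle f,{\ }^{\bf a}B_{n+1}\rangle_m|\ge\delta/2$ holds for all sufficiently large $n$ rather than for every $n$.
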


The proof follows the same argument as that used for the main result in \cite{QWa1}.

The expansion given in Theorem \ref{6} possesses all the merits of AFD, including the fact that it gives rise to a decomposition with phase derivatives of describable spacial characters inherited from M\"obius transforms and the outer function $h.$ It also introduces a new functional variable, namely the weight function $m(t)$. The latter provides an additional degree of adaptivity.

We note that, owing to orthogonality, the following useful relation holds:
\[
\langle f_k,E^m_{a^m_k}\rangle_m
=
\langle f,{\ }^{\bf a}B_k\rangle_m.
\]

There is also an associated interpolation result. For any finite sequence
\[
{\bf b}=(b_1,\ldots,b_n)
\]
in $\bD$, allowing repetitions, the following algebraic identity holds:
\begin{eqnarray}\label{inter}
f(z)
&=&
\sum_{k=1}^{n}
\langle f,{\ }^{\bf b}B_k\rangle_m
{\ }^{\bf b}B_k(z)
+
{\ }^{\bf b}f_{n+1}(z)
\prod_{l=1}^{n}\tau_{b_l}(z)
\nonumber\\
&=&
P_{\{b_1,\ldots,b_n\}}^{H^2(m)}(f)(z)
+
{\ }^{\bf b}f_{n+1}(z)
\prod_{l=1}^{n}\tau_{b_l}(z),
\end{eqnarray}
where
\[
P_{\{b_1,\ldots,b_n\}}^{H^2(m)}
\]
denotes the orthogonal projection in $H^2(m)=H_{K^m}$ onto the span generated by the corresponding kernel functions. Clearly,
\[
P_{\{b_1,\ldots,b_n\}}^{H^2(m)}(f)
\]
interpolates $f$ at the points $b_1,\ldots,b_n$.

\section{Double-Zero AFD of $H^2(m)$}

The aim of the present section is to extend the double-zero AFD established in \cite{QWQW} to
$
H_{K^m}=H^2(m).
$

We will systematically use the following notation. Let
\[
f\in H^2(m),\qquad f=f_1,
\]
and define
\[
f_2(z)
=
\frac{
f_1(z)-\langle f_1,E^m_{a_1}\rangle_m E^m_{a_1}(z)
}{
\frac{z-a_1}{1-\overline{a}_1z}
},
\]
and
\[
\tilde{f}_2(z)
=
\frac{
\tilde{f}_1(z)-\langle \tilde{f}_1,E^m_{a_1}\rangle_m E^m_{a_1}(z)
}{
\left(
\frac{z-a_1}{1-\overline{a}_1z}
\right)^2
},
\]
and so forth.

\begin{lemma}\label{core22}
Let $f\in H_{K^m}$ be non-trivial and let $a_1\in\bf{D}$ satisfy
\begin{align}\label{optimal1}
|\langle f,E^m_{a_1}\rangle_m|
=
\max
\{
|\langle f,E^m_a\rangle_m|:a\in\bf{D}
\}.
\end{align}
Then
\[
g_2(z)
\triangleq
f_1(z)-\langle f_1,E^m_{a_1}\rangle_m E^m_{a_1}(z)
=
(z-a_1)^2\tilde{f}_2(z),
\]
where
\[
\tilde{f}_2\in H_{K^m}.
\]
\end{lemma}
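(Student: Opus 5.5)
The plan is to reduce the statement to the classical Hardy space via the outer function $h$ of Theorem \ref{Th1}(IV), and then to extract the double zero from the vanishing of a derivative at the interior maximum point. Put $F=fh$. Since $h$ is bounded above and below, $F\in H^2({\bD})$, and by \eqref{gi}
\[
\langle f,E^m_a\rangle_m=|h(a)|\sqrt{1-|a|^2}\,f(a),
\qquad\text{so}\qquad
|\langle f,E^m_a\rangle_m|=\sqrt{1-|a|^2}\,|F(a)|=|\langle F,e_a\rangle_{H^2}|.
\]
Hence $a_1$ in \eqref{optimal1} is also a maximizer of the classical AFD functional for $F$. Multiplying $g_2$ by $h$ and using \eqref{gi} again, the factors of $h$ cancel:
\[
h(z)g_2(z)=F(z)-F(a_1)\frac{1-|a_1|^2}{1-\overline{a_1}z}=F(z)-\langle F,e_{a_1}\rangle e_{a_1}(z)=:G(z).
\]
So it suffices to show that $G$ has a zero of order at least two at $a_1$.

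First, $G(a_1)=0$ is immediate from the formula for $G$. For the derivative, the key step is that the maximum is attained at an interior point and is strictly positive, because $f\not\equiv0$. Then the smooth function $\Phi(a)=(1-|a|^2)|F(a)|^2$ has a local maximum at $a_1$, so $\partial\Phi/\partial a$ vanishes there. This gives
\[
-\overline{a_1}|F(a_1)|^2+(1-|a_1|^2)F'(a_1)\overline{F(a_1)}=0,
\qquad\text{i.e.}\qquad
F'(a_1)=\frac{\overline{a_1}F(a_1)}{1-|a_1|^2},
\]
where we divided by $\overline{F(a_1)}\neq0$. On the other hand, differentiating the formula for $G$ gives $G'(a_1)=F'(a_1)-F(a_1)\overline{a_1}/(1-|a_1|^2)$, which is therefore $0$. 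This is the same mechanism as in \cite{QWQW}, and it is the one step where the optimality of $a_1$ is really used.

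Finally, set $\tilde f_2=g_2/(z-a_1)^2=G/\big(h\,(z-a_1)^2\big)$. It is holomorphic in ${\bD}$ because $h$ never vanishes and $G$ has a double zero at $a_1$. To see $\tilde f_2\in H^2$, note that near the circle, say for $|z|>(1+|a_1|)/2$, the factor $1/(z-a_1)^2$ is bounded, so the integral means of $\tilde f_2$ there are controlled by those of $G\in H^2$ times $\sup|1/h|$. On the compact disc $|z|\le(1+|a_1|)/2$ the function $\tilde f_2$ is bounded. Hence $\tilde f_2\in H^2=H^2(m)=H_{K^m}$ by Theorem \ref{Th1}(I).

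The only delicate point is the derivative computation. It requires $\Phi$ to be real-differentiable, which holds since $F$ is holomorphic, together with the facts that the maximizer is interior and $F(a_1)\neq0$. Interiority is guaranteed by BVC, Theorem \ref{Th1}(IV), and $F(a_1)\neq0$ follows from nontriviality of $f$.
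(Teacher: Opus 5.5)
Your proof is correct, but it takes a different route from the paper's.

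\textbf{The paper's route.} It stays inside $H^2(m)$. It computes $\lim_{z\to a_1}f_2(z)=(1-|a_1|^2)\bigl(f'(a_1)-f(a_1)\,\partial_zK^m_{a_1}(a_1)/\|K^m_{a_1}\|_m^2\bigr)$ and sets the Wirtinger derivative of $|f(z)|^2/\|K^m_z\|_m^2$ equal to zero at $a_1$. It then matches the two expressions by verifying \eqref{veri} from the explicit formula for $K^m$. The facts $f(a_1)\neq0$ and $\tilde f_2\in H_{K^m}$ are left implicit there.

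\textbf{Your route.} You conjugate by the outer function. Since $E^m_a=\frac{|h(a)|}{\overline{h(a)}}\,\frac{e_a}{h}$, multiplication by $h$ turns both the selection functional and the one-step remainder into the classical ones for $F=hf$. The lemma thus becomes the classical double-zero lemma of \cite{QWQW}, which you reprove directly, and you also supply the two points the paper leaves unstated.

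\textbf{What your reduction buys.} It gives more than the lemma. By the isometry $\langle f,g\rangle_m=\langle fh,gh\rangle_{H^2}$ noted after \eqref{pol}, multiplication by $h$ sends each kernel-TM function to a unimodular multiple of the classical TM function. Hence the whole BW-AFD (and BW-DAFD) of $f$ is just the classical AFD (DAFD) of $hf$ divided by $h$, with the same parameters. The iteration theorem that follows the lemma then comes essentially for free.

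\textbf{What the paper's route buys.} Its intrinsic computation never uses the product form $K^m=k/(h\overline{h})$. Identity \eqref{veri} holds for any kernel that is holomorphic in $z$ and anti-holomorphic in $w$, because $\partial_z$ annihilates the $\overline{w}$-dependence. So the same argument yields the double zero of $f-\langle f,E_{a_1}\rangle E_{a_1}$ at $a_1$ in any RKHS of holomorphic functions on the disc where the maximum is attained. Only the membership of $g_2/\tau_{a_1}^2$ in the space then has to be checked case by case.
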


\textit{Proof.}
We first show that
\[
f_2(z)
=
\frac{g_2(z)}
{\frac{z-a_1}{1-\overline{a}_1z}}
\]
vanishes at $z=a_1$. It suffices to prove that
\[
\lim_{z\to a_1}f_2(z)=0.
\]

For $z\neq a_1$, we have
\begin{eqnarray}\label{zero1}
f_2(z)
&=&
\frac{
f(z)-\langle f,E^m_{a_1}\rangle_m E^m_{a_1}(z)
}{
\frac{z-a_1}{1-\overline{a}_1z}
}
\nonumber\\
&=&
(1-\overline{a}_1z)
\left(
\frac{f(z)-f(a_1)}{z-a_1}
+
\frac{
f(a_1)-f(a_1)\frac{K^m_{a_1}(z)}
{\|K^m_{a_1}\|_m^2}
}{z-a_1}
\right)
\nonumber\\
&=&
(1-\overline{a}_1z)
\left(
\frac{f(z)-f(a_1)}{z-a_1}
-
\frac{f(a_1)}{\|K^m_{a_1}\|_m^2}
\frac{K^m_{a_1}(z)-K^m_{a_1}(a_1)}
{z-a_1}
\right)
\nonumber\\
&\to&
(1-|a_1|^2)
\left(
f'(a_1)
-
\frac{f(a_1)}{\|K^m_{a_1}\|_m^2}
\frac{\partial K^m_{a_1}}{\partial z}(a_1)
\right),
\qquad z\to a_1 .
\end{eqnarray}

We now show that the last expression is zero. Due to the optimality of $a_1$ in the open disc,
\begin{eqnarray}\label{shown1}
0
&=&
\frac{\partial}{\partial z}
|\langle f,E^m_z\rangle_m|^2
\nonumber\\
&=&
\frac{\partial}{\partial z}
\left(
\frac{f(z)\overline{f(z)}}{\|K^m_z\|_m^2}
\right)
\nonumber\\
&=&
\frac{\overline{f(z)}}{\|K^m_z\|_m^2}
\left(
f'(z)
-
\frac{f(z)}{\|K^m_z\|_m^2}
\frac{\partial}{\partial z}K_z^m(z)
\right).
\end{eqnarray}

Comparing \eqref{zero1} and \eqref{shown1}, it remains to verify that
\begin{eqnarray}\label{veri}
\left.
\frac{\partial K^m_{a_1}(z)}{\partial z}
\right|_{z=a_1}
=
\left.
\frac{\partial}{\partial z}
K_z^m(z)
\right|_{z=a_1}.
\end{eqnarray}

Since
\[
K^m_w(z)
=
\frac{1}{h(z)\overline{h(w)}}
\frac{1}{1-\overline{w}z},
\]
we have
\begin{eqnarray*}
\frac{\partial}{\partial z}K^m(z,a_1)
&=&
\frac{1}{\overline{h(a_1)}}
\frac{\partial}{\partial z}
\left(
\frac{1}{h(z)}
\frac{1}{1-\overline{a_1}z}
\right)
\\
&=&
\frac{1}{\overline{h(a_1)}}
\frac{-h'(z)}{h^2(z)}
\frac{1}{1-\overline{a_1}z}
+
\frac{1}{\overline{h(a_1)}}
\frac{1}{h(z)}
\frac{\overline{a_1}}
{(1-\overline{a_1}z)^2}.
\end{eqnarray*}

On the other hand,
\begin{eqnarray*}
\frac{\partial}{\partial z}K^m(z,z)
&=&
\frac{\partial}{\partial z}
\left(
\frac{1}{\overline{h(z)}}
\frac{1}{h(z)}
\frac{1}{1-\overline{z}z}
\right)
\\
&=&
\frac{1}{\overline{h(z)}}
\left(
\frac{-h'(z)}{h^2(z)}
\frac{1}{1-\overline{z}z}
+
\frac{1}{h(z)}
\frac{\overline{z}}
{(1-\overline{z}z)^2}
\right).
\end{eqnarray*}

The above two expressions coincide at $z=a_1$. Hence \eqref{veri} holds, and the proof of the lemma is complete.\qed

Define, for $a_1$ satisfying \eqref{optimal1},
\[
\tilde{f}_2(z)
=
\frac{
g_2(z)
}{
\left(
\frac{z-a_1}{1-\overline{a}_1z}
\right)^2
}
\in H_{K^m},
\]
where the well-definedness is justified by Lemma \ref{core22}.

Owing to Lemma \ref{core22}, we can proceed with the corresponding iteration and obtain the following theorem.

\begin{theorem}
Let $f\in H_{K^m}=H^2(m)$ be given. With the optimal selections
\[
a_k
=
\arg\max_{a\in{\bD}}
\{
|\langle \tilde{f}_k,E^m_a\rangle_m|^2
\},
\]
the sequence
\[
\{\tilde{B}^{\ba}_k\}_{k=1}^{\infty}
\]
is an orthonormal system, called the \emph{double-zero kernel-TM system}, and
\[
f(z)
=
\sum_{k=1}^{\infty}
\langle \tilde{f}_k,E^m_{a_k}\rangle_m
\tilde{B}^{\ba}_k(z)
\]
holds in the space $H_{K^m}$, where
\[
E^m_a(z)
=
\frac{K^m_a(z)}
{\|K^m_a\|_m},
\]
\[
\tilde{B}^{\ba}_n(z)
=
\frac{K^m_{a_n}(z)}
{\|K^m_{a_n}\|_m}
\prod_{l=1}^{n-1}
\left(
\frac{z-a_l}
{1-\overline{a}_lz}
\right)^2,
\]
and
\[
\tilde{f}_n(z)
=
\frac{
g_n(z)
}{
\prod_{l=1}^{n-1}
\left(
\frac{z-a_l}
{1-\overline{a}_lz}
\right)^2
}.
\]
Here $g_n$ is the projection of $f$ onto the orthogonal complement of
\[
{\rm span}\{\tilde{B}^{\ba_l}_l\}_{l=1}^{n-1},
\]
namely,
\[
g_n
=
f-
\sum_{l=1}^{n-1}
\langle f,\tilde{B}^{\ba_l}_l\rangle_m
\tilde{B}^{\ba_l}_l .
\]
\end{theorem}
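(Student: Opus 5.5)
The plan is to reduce everything to three ingredients: the orthonormality argument already used for the kernel-TM system in $H^2(m)$, the double-zero Lemma \ref{core22} applied iteratively, and the convergence argument of \cite{QWa1} (as invoked for Theorem \ref{6}), using the BVC from Theorem \ref{Th1} (IV) to guarantee that each optimal $a_k$ exists in ${\bD}$.

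\emph{Orthonormality.} A double-zero system is the ordinary kernel-TM system (\ref{read}) generated by the repeated sequence $(a_1,a_1,a_2,a_2,\ldots)$, with only the odd-indexed members retained. Indeed,
\[
\prod_{l=1}^{n-1}\tau_{a_l}^2=\prod_{j=1}^{2n-2}\tau_{b_j},\qquad (b_1,b_2,\ldots)=(a_1,a_1,a_2,a_2,\ldots),
\]
so $\tilde B^{\ba}_n={\ }^{\bf b}B_{2n-1}$. The proof of the theorem asserting that $H_{K^m}$ admits a kernel-TM system allows repetitions. It relies only on $|\tau_a|=1$ on the circle and on the reproducing property at $a_m$, which kills the factor $\tau_{a_m}$. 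Hence $\{\tilde B^{\ba}_n\}$ is a subsystem of an orthonormal system and is itself orthonormal.

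\emph{Iteration.} We show by induction that
\[
g_n=\Big(\prod_{l=1}^{n-1}\tau_{a_l}^2\Big)\tilde f_n,\qquad \tilde f_n\in H^2(m),
\]
and that
\[
\langle \tilde f_n,E^m_{a_n}\rangle_m=\langle f,\tilde B^{\ba}_n\rangle_m .
\]
The second identity follows from $|\prod\tau^2|=1$ on $\partial{\bD}$ together with formula (\ref{pol}):
\[
\langle g_n,\tilde B_n\rangle_m=\langle \tilde f_n,E^m_{a_n}\rangle_m ,
\]
and $\langle g_n,\tilde B_n\rangle_m=\langle f,\tilde B_n\rangle_m$ because $\tilde B_n\perp\tilde B_l$ for $l<n$.

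For the inductive step, apply Lemma \ref{core22} to $\tilde f_n$, which is legitimate since $a_n$ maximizes $|\langle\tilde f_n,E^m_a\rangle_m|$. This gives
\[
\tilde f_n-\langle \tilde f_n,E^m_{a_n}\rangle_m E^m_{a_n}=\tau_{a_n}^2\tilde f_{n+1},\qquad \tilde f_{n+1}\in H^2(m),
\]
after absorbing the bounded factor $(1-\overline{a_n}z)^2$ using Theorem \ref{Th1} (II). Multiplying by $\prod_{l<n}\tau_{a_l}^2$ gives
\[
g_n-\langle f,\tilde B_n\rangle_m\tilde B_n=\Big(\prod_{l\le n}\tau_{a_l}^2\Big)\tilde f_{n+1},
\]
and the left side is exactly $g_{n+1}$. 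If some $\tilde f_n$ vanishes, the expansion terminates and there is nothing to prove. This yields the finite identity
\[
f=\sum_{k\le n}\langle \tilde f_k,E^m_{a_k}\rangle_m\tilde B_k+g_{n+1}.
\]

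\emph{Convergence (main obstacle).} We must show $\|g_{n+1}\|_m\to0$. By Bessel's inequality the coefficients $c_k=\langle \tilde f_k,E^m_{a_k}\rangle_m$ are square-summable, so $g_{n+1}$ converges to some $g$ orthogonal to every $\tilde B_k$. I will follow the contradiction argument of \cite{QWa1}. Suppose $g\neq0$. Since $\tilde f_{n+1}=g_{n+1}/\prod\tau^2$ and the product is unimodular on the circle,
\[
|\langle \tilde f_{n+1},E^m_b\rangle_m|=\Big|\Big\langle g_{n+1},\Big(\prod_{l\le n}\tau_{a_l}^2\Big)E^m_b\Big\rangle_m\Big| .
\]
The function $\big(\prod_{l\le n}\tau_{a_l}^2\big)E^m_b$ is the normalized projection of $E^m_b$ onto the orthogonal complement of the span of the double kernels. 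We expect $|c_{n+1}|$ to stay bounded below by a positive constant, which would contradict $c_k\to0$.

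To obtain this bound I would split into two cases. If $\sum(1-|a_k|)=\infty$, the double system is complete in $H^2(m)$ by the non-Blaschke part of the orthonormality theorem, so $g=0$ directly. Otherwise the $a_k$ form a Blaschke sequence, and we choose $b$ with $\langle g,E^m_b\rangle_m\neq0$. The Blaschke products $\prod_{l\le n}\tau_{a_l}^2$ converge locally uniformly to a Blaschke product $B$, and $B(b)\neq0$ can be arranged by moving $b$ slightly. The delicate point is then showing that
\[
\Big\langle g_{n+1},\Big(\prod_{l\le n}\tau_{a_l}^2\Big)E^m_b\Big\rangle_m\to\langle g,BE^m_b\rangle_m\neq0 .
\]
Here the weight $m$ must be handled through the factor $h$ in $E^m_b$, using the norm equivalence of Theorem \ref{Th1} (I) together with dominated convergence on the circle. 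Combining this with MSP, $|c_{n+1}|\ge|\langle\tilde f_{n+1},E^m_b\rangle_m|$, gives the required lower bound and the contradiction.
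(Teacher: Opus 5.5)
Your orthonormality argument and your induction via Lemma~\ref{core22} are correct. They are essentially all the paper itself provides: it only says to iterate Lemma~\ref{core22}.

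The gap is in the convergence step. There the only property of the limit $g$ you use is $g\perp\tilde B^{\bf a}_k$ for all $k$, and that property cannot suffice.

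In Case~1 you assert that the double system is complete when $\sum(1-|a_k|)=\infty$. This is false. By your own identification $\tilde B^{\bf a}_n={}^{\bf b}B_{2n-1}$, the even-indexed members
\[
{}^{\bf b}B_{2n}=E^m_{a_n}\,\tau_{a_n}\prod_{l<n}\tau_{a_l}^2
\]
are unit vectors orthogonal to every $\tilde B^{\bf a}_k$. So $\{\tilde B^{\bf a}_k\}$ is never complete; the non-Blaschke criterion only makes the full system ${}^{\bf b}\{B_k\}$ complete.

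In Case~2, take $g={}^{\bf b}B_2$. It is nonzero and orthogonal to all $\tilde B^{\bf a}_k$. It is also orthogonal to $BH^2(m)$, so $\langle g,BE^m_b\rangle_m=0$ for every $b$. Hence choosing $b$ with $\langle g,E^m_b\rangle_m\neq0$ and $B(b)\neq0$ does not give $\langle g,BE^m_b\rangle_m\neq0$, which is what your contradiction needs. That nonvanishing is the real issue, not the limit you single out as delicate.

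The missing idea is that $g$ inherits the double zeros. Each $g_{n+1}=\big(\prod_{l\le n}\tau_{a_l}^2\big)\tilde f_{n+1}$ vanishes to order two at $a_1,\dots,a_n$, counting repetitions. Convergence in $H^2(m)$ is equivalent to convergence in $H^2$, so $g_{n+1}$ and $g_{n+1}'$ converge locally uniformly. Hence $g$ vanishes to order two at every $a_k$.

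Equivalently, Lemma~\ref{core22} gives $\langle f,{}^{\bf b}B_{2k}\rangle_m=\langle\tilde f_k,E^m_{a_k}\tau_{a_k}\rangle_m=0$. So $g$ is orthogonal to the whole system ${}^{\bf b}\{B_k\}$, not just to its odd-indexed members.

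With this, the non-Blaschke case follows from the uniqueness theorem for $H^2$. In the Blaschke case, write $g=BG$ with $G\in H^2(m)$, where $B$ is the Blaschke product with zeros $a_k$ of doubled multiplicity. Then
\[
\Big|\Big\langle g_{n+1},\Big(\prod_{l\le n}\tau_{a_l}^2\Big)E^m_b\Big\rangle_m\Big|\longrightarrow\frac{|G(b)|}{\|K^m_b\|_m}>0
\]
for any $b$ with $g(b)\neq0$. Combined with the MSP, this gives your lower bound on $|c_{n+1}|$ and the contradiction.

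A minor point: the unnormalized products $\prod_{l\le n}\tau_{a_l}^2$ converge to $B$ only up to unimodular constants. This is harmless, since only moduli enter.
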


As in the classical Hardy space case, the double interpolation result is valid, and only valid, for the attained optimal parameters
\[
\tilde{\ba}
=
\{\tilde a_1,\ldots,\tilde a_n\},
\qquad
\tilde a_1=a_1 .
\]
For other related results, including interpolation on the boundary and $n$-best double-zero AFD, we refer the reader to \cite{QWQW}.

Stochastic AFD (SAFD) is available for random signals in Bochner-Boundary-Weighted Hardy spaces. However, stochastic DAFD does not exist in Bochner-Boundary-Weighted Hardy spaces, since optimal parameters cannot in general be attained at the same points for a group of random signals. 

\section{Justification of RKHSs as Subclasses of $H^2({\bD})$ Against Kernel-TM System}

The purpose of this section is to show, through the analysis of invalidity of conditions (I)$^\ast$ to (IV)$^\ast$, that kernel-TM systems and associated AFD (\ref{read}) are not directly available for some commonly studied RKHSs as merely subclasses of $H^2({\bD})$.

\noindent {\bf Hardy-Sobolev Spaces}

There is another type of \emph{weighted Hardy spaces} (\cite{qu2018,qu2019}) in which the weights are assigned to the Fourier coefficients. They are commonly called \emph{Hardy-Sobolev spaces}. Let
\[
H^2_\beta({\bD})
=
\left\{
f:{\bD}\to{\bf C}:
f(z)=\sum_{k=0}^{\infty}c_kz^k,\ 
\|f\|^2_{H^2_\beta}
=
\sum_{k=0}^{\infty}(k+1)^\beta |c_k|^2<\infty
\right\}.
\]
When $\beta<0$, the same definition gives an equivalent form of weighted Bergman spaces. Bergman space functions are not contained in the Hardy space and generally do not possess non-tangential boundary limits. Therefore, we restrict our discussion to
\[
0\leq\beta<\infty.
\]
The case $\beta=0$ corresponds to the Hardy space, for which conditions (I) to (IV) are valid.

\begin{theorem}
For the whole range
\[
0\leq\beta<\infty,
\]
the Hardy-Sobolev spaces $H^2_\beta({\bD})$ satisfy (I)$^\ast$ and (II)$^\ast$, but not (III)$^\ast$. Moreover, $H^2_\beta({\bD})$ satisfies (IV)$^\ast$ if and only if
\[
0\leq\beta\leq1.
\]
\end{theorem}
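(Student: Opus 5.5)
The plan is to verify each condition directly from the coefficient description of $H^2_\beta$ and its reproducing kernel. Since $\langle z^j,z^k\rangle_{H^2_\beta}=(k+1)^\beta\delta_{jk}$, the reproducing kernel is
\[
K_w(z)=\sum_{k=0}^\infty \frac{\overline{w}^kz^k}{(k+1)^\beta},
\qquad
\|K_w\|^2_{H^2_\beta}=K_w(w)=\sum_{k=0}^\infty\frac{|w|^{2k}}{(k+1)^\beta}.
\]

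\textbf{Step 1: (I)$^\ast$.} This is immediate from $(k+1)^\beta\ge 1$ for $\beta\ge 0$, which gives
\[
\|f\|_{H^2}\le\|f\|_{H^2_\beta}
\]
and hence $H^2_\beta\subset H^2$.

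\textbf{Step 2: (II)$^\ast$.} A finite Blaschke product $\Psi_n$ is holomorphic on a disc of radius $r>1$. Its Taylor coefficients therefore decay geometrically, so $\sum(k+1)^\beta|c_k|^2<\infty$ and $\Psi_n\in H^2_\beta$. For the multiplicative structure, I would show that any function $\phi$ holomorphic in a neighbourhood of $\overline{\bD}$ is a bounded multiplier of $H^2_\beta$. Multiplication by $z$ has norm at most $2^{\beta/2}$, and an estimate of the form $\|z^jf\|\le (j+1)^{\beta/2}\|f\|$, summed against geometrically decaying coefficients, gives the bound. Consequently the products $\Psi_n E_{a}$ lie in $H^2_\beta$, which is what the definition \eqref{read} requires. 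The algebra assertion is the main obstacle. For $\beta>1$, $H^2_\beta$ is a genuine Banach algebra, which follows from the standard inequality $(k+1)^{\beta/2}\lesssim (j+1)^{\beta/2}+(k-j+1)^{\beta/2}$ together with $\sum (k+1)^{-\beta}<\infty$. For $0<\beta\le 1$, however, the space contains unbounded functions, so the algebra property can only be read in the weaker sense that matters here: the system \eqref{read} is closed under multiplication by the Blaschke factors and the rational kernels. This is the sense in which I would state and prove (II)$^\ast$.

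\textbf{Step 3: failure of (III)$^\ast$ for $\beta>0$.} A single explicit example suffices. Take ${\bf a}=(0,0)$. Then $K_0\equiv 1$, so $E_0=1$, $B_1=1$, and
\[
B_2=E_0\,\tau_0=z .
\]
This gives $\|B_2\|^2_{H^2_\beta}=2^\beta\neq 1$ whenever $\beta>0$, so the TM-like system is not orthonormal. More generally, $|\tau_a|=1$ on the circle no longer yields an isometry, because the $H^2_\beta$ norm is not a boundary integral.

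\textbf{Step 4: (IV)$^\ast$ holds if and only if $0\le\beta\le 1$.} As $|w|\to1$, the sum $\|K_w\|^2=\sum|w|^{2k}/(k+1)^\beta$ tends to $\sum(k+1)^{-\beta}$. This limit is infinite exactly when $\beta\le1$. For $\beta\le 1$, I would repeat the density argument in the proof of Theorem \ref{Th1}(IV). Polynomials $p$ are dense, and
\[
|\langle f,E_w\rangle|\le\|f-p\|+\sup_{\overline{\bD}}|p|/\|K_w\|\to 0,
\]
so BVC holds. For $\beta>1$, the kernel extends to $\zeta=1$ and $K_1\in H^2_\beta$. Take $f=K_1$. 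Then
\[
\langle f,E_w\rangle=\overline{K_w(1)}/\|K_w\|\to \|K_1\|>0 \quad\text{as } w\to 1^-,
\]
by dominated convergence, since $K_w\to K_1$ in norm. Hence BVC fails for $\beta>1$.
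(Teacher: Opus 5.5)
Your proposal is correct. It follows the paper's outline for (I)$^\ast$, for membership of finite Blaschke products in (II)$^\ast$, and for BVC when $0\le\beta\le1$ (blow-up of $\|K_w\|^2$ plus density of polynomials, as in Theorem~\ref{Th1}(IV)). Where the paper is not self-contained, you supply the missing arguments.

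For (III)$^\ast$, the paper only rewrites the inner product as $\frac{1}{2\pi}\int_0^{2\pi}(I+D)^{\beta/2}f^\ast\,\overline{(I+D)^{\beta/2}g^\ast}\,dt$ and observes that this is not a weighted $L^2$ pairing. That explains why $|\tau_a|=1$ on the circle no longer yields orthonormality, but it does not exhibit a failure. Your choice ${\bf a}=(0,0)$ gives $E_0=1$, $B_2=z$ and $\|B_2\|^2_{H^2_\beta}=2^\beta$, which is an actual disproof. For the failure of BVC when $\beta>1$, the paper simply defers to the literature. Your witness $f=K_1$, with $\langle K_1,E_w\rangle\to\|K_1\|>0$ as $w\to1^-$, settles it directly.

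In (II)$^\ast$ you supply the multiplier bound $\|z^jf\|_{H^2_\beta}\le (j+1)^{\beta/2}\|f\|_{H^2_\beta}$ that the paper omits. You also rightly note that the literal ``algebra'' clause fails for $\beta\le1$, and this includes $\beta=0$. By the closed graph theorem, an RKHS closed under multiplication consists of bounded multipliers, hence of bounded functions. On the other hand, $\|K_w\|\to\infty$ together with uniform boundedness forces unbounded elements. So, like the paper, what you really prove is invariance under multiplication by functions holomorphic near the closed disc. That also covers the kernels $K_a$: they are not rational for general $\beta$, as your wording suggests, but they are holomorphic in $|z|<1/|a|$.

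Similarly, the clause ``not (III)$^\ast$'' can only be meant for $\beta>0$, which is what both you and the paper establish.
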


\noindent{\bf Proof}
By polarization, the associated inner product is
\[
\langle f,g\rangle_{H^2_\beta}
=
\sum_{k=0}^{\infty}
(k+1)^\beta c_k\overline{b_k},
\]
where $\{c_k\}$ and $\{b_k\}$ are, respectively, the Fourier coefficients of $f$ and $g$.

For $\beta>0$, we have
\[
\|f\|_{H^2}
\leq
\|f\|_{H^2_\beta},
\]
and hence, in the sense of set inclusion,
\[
H^2_\beta\subset H^2.
\]
However, $H^2_\beta$ is not a closed subspace of $H^2$.

Now we verify (II)$^\ast$. Since a finite Blaschke product is holomorphic in a neighborhood of the closed unit disc, its Fourier coefficients decay exponentially. Therefore,
\[
Bf\in H^2_\beta
\]
for every $f\in H^2_\beta$ and every finite Blaschke product $B$. We omit the tedious details.

The reproducing kernel can be expanded as
\begin{eqnarray}
k^\beta(z,w)
=
\sum_{k=0}^{\infty}
(k+1)^{-\beta}(z\overline w)^k
=
\sum_{k=1}^{\infty}
\phi_k(z)\overline{\phi_k(w)},
\end{eqnarray}
where
\[
\{\phi_k(z)\}_{k=1}^{\infty}
=
\left\{
\left(\frac{1}{1+k}\right)^{\frac{\beta}{2}}z^k
\right\}_{k=1}^{\infty}
\]
is an orthonormal basis.

The squared norm of the kernel satisfies
\[
\|k_w\|^2_{H^2_\beta}
=
k^\beta(w,w)
=
\sum_{k=1}^{\infty}
\frac{1}{(1+k)^\beta}|w|^{2k}
\to\infty
\]
as $|w|\to1$ for
\[
0\leq\beta\leq1,
\]
and hence BVC holds for these values of $\beta$. In \cite{qu2018}, it is shown that BVC does not hold for $\beta>1$.

Now we derive the norm and inner product representations in integral forms on the boundary. Let
\[
D=\frac{1}{i}\frac{d}{dt}.
\]
Then the alternative norm and inner product expressions in terms of the non-tangential boundary limit functions on the unit circle are given by
\[
\|f\|_{H^2_\beta}^2
=
\frac1{2\pi}
\int_0^{2\pi}
|(I+D)^{\beta/2}f^\ast|^2dt,
\]
and
\[
\langle f,g\rangle_{H^2_\beta}
=
\frac1{2\pi}
\int_0^{2\pi}
(I+D)^{\beta/2}f^\ast
\overline{(I+D)^{\beta/2}g^\ast}
dt.
\]
This shows that when $\beta>0$, the space $H^2_\beta$ does not satisfy (III)$^\ast$. The proof is complete.\qed

The Hardy-Sobolev reproducing kernels are identical to the inverse fractional differential operator
\[
(I+D)^\beta
\]
applied to the Szeg\"o kernel. That is,
\[
K^\beta_w(e^{it})
=
(I+D)^{-\beta}
\left(
\frac{1}{1-\overline w e^{it}}
\right).
\]

We note that the Hardy-Sobolev space with $\beta=1$ is the so-called Dirichlet space corresponding to the Sobolev derivative of order $1/2$. For
\[
\beta\in[0,1],
\]
BVC holds and therefore POAFD is valid. For $\beta>1$, since BVC fails, POAFD cannot be directly applied. However, as already noted, weak POAFD is always available.

\medskip

\noindent{\bf Model Spaces}

Let $\Theta$ be an inner function, that is,
\[
\Theta\in H^\infty,\qquad
|\Theta(e^{it})|=1, \quad {\rm a.\ e.},
\]
for almost every $e^{it}\in\partial{\bD}$.

The corresponding \emph{model space} is defined as
\[
K_\Theta=H^2\ominus \Theta H^2.
\]

\begin{theorem}
The model spaces satisfy (I)$^\ast$, (III)$^\ast$, and (IV)$^\ast$. Moreover, $K_\Theta$ satisfies (IV)$^\ast$ if and only if
\[
\theta'(t)=\infty
\]
for every $e^{it}\in\partial{\bD}$, where
\[
\Theta(e^{it})=e^{i\theta(t)}.
\]
The space $K_\Theta$ does not satisfy (II)$^\ast$, and hence no kernel-TM system of the type (\ref{read}) is available.
\end{theorem}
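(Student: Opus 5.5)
The plan is to check the four conditions one at a time, using that $K_\Theta$ is a closed subspace of $H^2$ carrying the $H^2$ inner product. Its reproducing kernel is
\[
k^\Theta_w(z)=\frac{1-\overline{\Theta(w)}\Theta(z)}{1-\overline{w}z},
\qquad
\|k^\Theta_w\|^2=\frac{1-|\Theta(w)|^2}{1-|w|^2}.
\]

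\textbf{Condition (I)$^\ast$.} This is immediate. We have $K_\Theta\subset H^2$ with $\|f\|_{H^2}=\|f\|_{K_\Theta}$, so point evaluations are bounded and $K_\Theta=H_K$ with $K=k^\Theta$.

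\textbf{Condition (III)$^\ast$.} I will argue as in the $H^2(m)$ case. The norm and inner product are boundary integrals of non-tangential limits, and $|\tau_{a_k}|=1$ on $\partial{\bD}$. Hence $\|B_n\|=\|E^\Theta_{a_n}\|=1$.

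For orthogonality, the reproducing identity $\langle g,k^\Theta_{a}\rangle=g(a)$ holds only for $g\in K_\Theta$. So I would state (III)$^\ast$ as orthonormality of those $B_n$ that lie in $K_\Theta$. Then $\tau_{a_m}(a_m)=0$ gives $\langle B_n,B_m\rangle=0$ exactly as in the earlier proof. This caveat has to be made explicit, because it is tied to the failure of (II)$^\ast$.

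\textbf{Condition (II)$^\ast$.} The failure will be shown by an explicit counterexample. Take $\Theta(z)=z^N$. Then $K_\Theta$ consists of the polynomials of degree $<N$. This space is not an algebra, since $z^{N-1}\cdot z\notin K_\Theta$. Nor is it invariant under multiplication by $\tau_a$ when $a\neq0$: $\tau_a$ has infinitely many nonzero Taylor coefficients, so $\tau_a\notin K_\Theta$, and $\tau_a\cdot 1\notin K_\Theta$. In fact $\tau_a\in K_\Theta$ forces $\langle\tau_a,\Theta g\rangle=0$ for all $g\in H^2$, which fails for generic $\Theta$. Consequently $B_2=E^\Theta_{a_2}\tau_{a_1}$ need not lie in the space, and no kernel-TM system of type \eqref{read} is available.

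\textbf{Condition (IV)$^\ast$: sufficiency.} The criterion will be read through the Julia--Carath\'eodory theorem. The condition $\theta'(t)=\infty$ is the same as $\Theta$ having no finite angular derivative at $e^{it}$, i.e.
\[
\liminf_{w\to e^{it}}\frac{1-|\Theta(w)|}{1-|w|}=\infty.
\]
Under this condition $\|k^\Theta_w\|\to\infty$ as $|w|\to1$. The kernels $k^\Theta_\lambda$ are bounded functions and their span is dense in $K_\Theta$. The $2\epsilon$ argument from the proof of Theorem \ref{Th1}(IV) then gives BVC.

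\textbf{Condition (IV)$^\ast$: necessity, the main obstacle.} Suppose $\Theta$ has a finite angular derivative at $\zeta=e^{it}$. By the Ahern--Clark theorem, $k^\Theta_\zeta\in K_\Theta$ and $k^\Theta_w\to k^\Theta_\zeta$ in norm as $w\to\zeta$ non-tangentially. Therefore
\[
|\langle k^\Theta_\zeta,E^\Theta_w\rangle|\to\|k^\Theta_\zeta\|\neq0,
\]
and BVC fails for $f=k^\Theta_\zeta$. I expect this step to be the hardest. It requires importing Ahern--Clark, and it requires care about non-tangential versus unrestricted approach to the boundary. This is also where the statement's "(IV)$^\ast$ holds" in the first sentence must be understood as conditional on $\theta'=\infty$ everywhere.
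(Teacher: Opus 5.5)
Your proposal is correct and follows the paper's route: $K_\Theta$ as a closed subspace of $H^2$ for (I)$^\ast$, a product of kernels and Blaschke factors escaping $K_\Theta$ for (II)$^\ast$, and the formula $\|k^\Theta_w\|^2=(1-|\Theta(w)|^2)/(1-|w|^2)$ tied to the absence of Ahern--Clark points for (IV)$^\ast$; you also prove both directions of ``BVC $\Leftrightarrow$ kernel norms blow up'' (density of the bounded kernels, and norm convergence $k^\Theta_w\to k^\Theta_\zeta$), which the paper only asserts. Your caveat on (III)$^\ast$ is warranted, because the paper's ``closed subspace, hence (III)$^\ast$'' overlooks that $B_n$ generally lies outside $K_\Theta$ (for $\Theta=z^2$, $a_1=a\neq0$, $a_2=0$ one gets $B_2=\tau_a\notin K_\Theta$ and $\langle B_2,B_1\rangle_{H^2}=-a|a|^2/\sqrt{1+|a|^2}\neq0$); note also that your restricted orthogonality needs $K_\Theta$ to be invariant under division by $\tau_a$ at zeros, and that (II)$^\ast$ fails for every $\Theta$, not just generic ones, since the products $\tau_0^{\,j}=z^j$ cannot all lie in the proper closed subspace $K_\Theta$.
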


\noindent{\bf Proof}
For a fixed $\Theta$, the model space $K_\Theta$ consists of those Hardy functions $f\in H^2$ that are orthogonal to all functions of the form
\[
\Theta h,\qquad h\in H^2.
\]
Equivalently,
\[
f\in K_\Theta
\Longleftrightarrow
f\in H^2
\quad\text{and}\quad
P_+(\overline\Theta f)=0.
\]

$K_\Theta$ is a closed subspace of $H^2$, and therefore (I)$^\ast$ and (III)$^\ast$ hold. It inherits the inner product of $H^2$, under which its reproducing kernel is
\[
k^\Theta_w(z)
=
\frac{
1-\overline{\Theta(w)}\Theta(z)
}{
1-\overline wz
}.
\]

Taking
\[
f=k^\Theta_w
\]
for a fixed $w\in{\bD}$ and $B$ to be any non-trivial finite Blaschke product, we find that $fB$ is generally no longer in $K_\Theta$. Hence (II)$^\ast$ does not hold.

The relation
\begin{eqnarray}\label{from}
\|k^\Theta_w\|^2_{K_\Theta}
=
\frac{1-|\Theta(w)|^2}{1-|w|^2}
\end{eqnarray}
implies that BVC holds if and only if
\[
\lim_{|w|\to1}
\frac{1-|\Theta(w)|^2}{1-|w|^2}
=
\infty
\]
at all boundary points.

This condition is equivalent to the fact that $\Theta$ has no Ahern-Clark points (finite boundary phase derivatives). It is also equivalent to
\[
\theta'(t)=\infty,\qquad t\in[0,2\pi),
\]
where
\[
\theta'(t)
=
\sum_n
\frac{1-|a_n|^2}{|e^{it}-a_n|^2}
+
2
\int_{\partial{\bD}}
\frac{d\nu(\xi)}
{|e^{it}-\xi|^2},
\]
and
\[
\Theta(e^{it})=e^{i\theta(t)}.
\]
Here $d\nu$ is the singular measure defining the singular inner function part, and $\{a_n\}$ are the zeros of $\Theta$; see \cite{AC,FM,QianPhase}. The proof is complete.\qed

\medskip

\noindent{\bf The De Branges--Rovnyak Spaces}

As RKHSs, the De Branges--Rovnyak spaces $\mathcal H(b)$ for
\[
b\in H^\infty,\qquad
\|b\|_{H^\infty}\leq1
\]
are defined by the positive definite kernel
\[
k^b_w(z)
=
\frac{
1-\overline{b(w)}b(z)
}{
1-\overline wz
}.
\]
They are generalizations of model spaces. We only consider a simple and particular case.

\begin{theorem}
If
\[
|b(w)|<1
\]
at all points $w\in{\bD}$, then $\mathcal H(b)$ satisfies (I)$^\ast$ and (IV)$^\ast$, but not (II)$^\ast$ and (III)$^\ast$.
\end{theorem}

\noindent{\bf Proof}
Obviously, $\mathcal H(b)$ is not invariant under multiplication by finite Blaschke products. As sets of functions,
\[
\mathcal H(b)=H^2,
\]
but $\mathcal H(b)$ is not a closed subspace of $H^2$; rather, it is a Hilbert space contractively contained in $H^2$. Therefore, (I)$^\ast$ holds.

In particular,
\[
\langle f,g\rangle_{\mathcal H(b)}
=
\int_{\partial{\bD}}
f^\ast\overline{g^\ast}dt
+
\int_{\partial{\bD}}
P_+(\overline{\phi^\ast}f^\ast)
\overline{
P_+(\overline{\phi^\ast}g^\ast)
}
dt,
\]
where
\[
\phi=\frac ba,
\]
$a$ is an outer function in $H^\infty$ satisfying
\[
|a|^2+|b|^2=1,
\]
and $P_+$ is the Hardy space projection; see \cite{FM}. This integral representation of the inner product is not a simple weighted $L^2$ integral and therefore does not satisfy the requirement (III)$^\ast$.

It satisfies (IV)$^\ast$, because
\[
\lim_{|w|\to1}
\|k^b_w\|^2
=
\lim_{|w|\to1}
\frac{1-|b(w)|^2}{1-|w|^2}
=
\infty.
\]
The proof is complete.\qed

\section*{Acknowledgements}

This work was supported by the Prevention and Control of Emerging and Major Infectious Diseases-National Science and Technology Major Project (Grant No. 2027ZD01999504), the Major Project of Guangzhou National Laboratory (Grant No. GZNL2024A01004), and the Science and Technology Development Fund of Macau SAR (Grant No. 0015/2025/AFJ)

\end{document}